\renewcommand{\O}{\sO}
\renewcommand{\to}[1][]{\xrightarrow{\ #1\ }}
\newcommand{\into}[1][]{\lhook \joinrel \xrightarrow{\ #1\ }}
\renewcommand{\sHom}[0]{{\mcH\mco\mcm}}
\renewcommand{\myR}{\mcR}
\def\frm{\mathfrak{m}}
\def\rd#1.{\lfloor{#1}\rfloor}
\def\rp#1.{\lceil{#1}\rceil}
\newcommand{\wb}{\overline}
\renewcommand{\wt}{\widetilde}
\newcommand{\ntl}{\natural}
\DeclareMathOperator{\tr}{tr}
\renewcommand\factor[2]{\left. \raise 2pt\hbox{$#1$} \right/\hskip -2pt \raise -2pt\hbox{$#2$}}
\setlist[enumerate]{label=(\thetheorem.\arabic*), before={\setcounter{enumi}{\value{equation}}}, after={\setcounter{equation}{\value{enumi}}}}
\numberwithin{equation}{theorem}
\begin{document}

\title[$F$-jumping numbers at isolated non-$\bQ$-Gorenstein points]{Discreteness of $F$-jumping numbers at isolated non-$\bQ$-Gorenstein points}
\date{April 11, 2017}
\thanks{The first-named author was supported in part by the DFG grant ``Zur Positivit\"at in der komplexen Geometrie''. The second named author was supported in part by the NSF grant DMS \#1064485,
  NSF FRG Grant DMS \#1501115, NSF CAREER Grant DMS \#1501102}

\begin{abstract}
We show that the $F$-jumping numbers of a pair $(X, \fra)$ in positive characteristic have no limit points whenever the symbolic Rees algebra of $-K_X$ is finitely generated outside an isolated collection of points.
We also give a characteristic zero version of this result, as well as a generalization of the Hartshorne--Speiser--Lyubeznik--Gabber stabilization theorem describing the non-$F$-pure locus of a variety.
\end{abstract}

\author[P.~Graf]{Patrick Graf}
\address{PG: Lehrstuhl f\"ur Mathematik I, Universit\"at Bay\-reuth,
  95440 Bayreuth, Germany} %
\urladdr{\href{http://www.pgraf.uni-bayreuth.de/en/}{www.pgraf.uni-bayreuth.de/en/}}
\email{\href{mailto:patrick.graf@uni-bayreuth.de}{patrick.graf@uni-bayreuth.de}}

\author[K.~Schwede]{Karl Schwede}
\address{KS: Department of Mathematics, The University of Utah,
155 S 1400 E Room 233, Salt Lake City, UT 84112, USA}
\urladdr{\href{http://www.math.utah.edu/~schwede/}{www.math.utah.edu/~schwede/}}
\email{\href{mailto:schwede@math.utah.edu}{schwede@math.utah.edu}}

\subjclass[2010]{13A35, 14F18}

\keywords{Test ideals, $F$-jumping numbers, $\bQ$-Gorenstein, multiplier ideals}

\maketitle

\section{Introduction}

By now it is well understood that there is an interesting connection between multiplier ideals in characteristic zero, defined via resolution of singularities, and test ideals in positive characteristic, defined via the behavior of the Frobenius map.
Recall that for any complex pair $(X, \fra)$, the multiplier ideal $\mJ(X, \fra^t)$ gets smaller as $t$ increases, but it does not change if we increase $t$ just slightly: $\mJ(X, \fra^t) = \mJ(X, \fra^{t+\varepsilon})$ for $0 < \varepsilon \ll 1$.
Hence it makes sense to define the jumping numbers of $(X, \fra)$ as those real numbers $t_i$ such that $\mJ(X, \fra^{t_i}) \subsetneq \mJ(X, \fra^{t_i-\varepsilon})$ for $\varepsilon > 0$.
By analogy, the $F$-jumping numbers are the real numbers $t_i$ where the test ideal jumps or changes: \mbox{$\tau(X, \fra^{t_i}) \subsetneq \tau(X, \fra^{t_i-\varepsilon})$} for $\varepsilon > 0$.

The discreteness and rationality of ($F$-)jumping numbers has been studied by many authors, e.g.~\cite{EinLazSmithVarJumpingCoeffs,BoucksomdeFernexFavreUrbinatiValSpaces,HaraMonskyFPureThresholdsAndFJumpingExponents,BlickleMustataSmithDiscretenessAndRationalityOfFThresholds,BlickleMustataSmithFThresholdsOfHypersurfaces,KatzmanLyubeznikZhangOnDiscretenessAndRationality,BlickleSchwedeTakagiZhang,KatzmanZhangCastelnuovoRegularity,SchwedeTuckerTestIdealsOfNonPrincipal,KatzmanSchwedeSinghZhang}.
In characteristic zero, discreteness and rationality of jumping numbers is elementary if $X$ is $\bQ$-Gorenstein, but rationality fails in general~\cite[Theorem~3.6]{UrbinatiDiscrepanciesOfNonQGorensteinVars}. Discreteness remains an open problem, with several special cases known, e.g.~if the non-$\bQ$-Gorenstein locus of $X$ is zero-dimensional~\cite[Theorem~1.4]{GrafJumpingCoefficientsOfNonQGor} (and \cite[Theorem~5.2]{UrbinatiDiscrepanciesOfNonQGorensteinVars} for an earlier, weaker version).
For test ideals, discreteness and rationality are known whenever the algebra of local sections
\[ \sR \big( X, -(K_X + \Delta) \big) := \bigoplus_{m \ge 0} \O_X \big( \rd -m(K_X + \Delta). \big) \]
(also known as the symbolic Rees algebra) is finitely generated~\cite{BlickleSchwedeTakagiZhang, SchwedeDiscretenessQGorenstein, ChiecchioEnescuMillerSchwede}. In this paper, we prove the following result.

\begin{theoremA*}[Discreteness of $F$-jumping numbers, \autoref{thm.discrete isol}]
Suppose that $X$ is a normal variety over an $F$-finite field $k$ of positive characteristic and that $\Delta \geq 0$ is a $\bQ$-divisor such that $\sR \big( X, -(K_X + \Delta) \big)$ is finitely generated except at an isolated collection of points. Suppose $\fra \subseteq \O_X$ is a nonzero coherent ideal sheaf. Then the $F$-jumping numbers of $(X, \Delta, \fra)$ have no limit points.
\end{theoremA*}

The corresponding statement in characteristic zero is also new.

\begin{theoremB*}[Discreteness of jumping numbers, \autoref{thm.discrete klt}]
Suppose that $X$ is a normal variety over a field $k$ of characteristic zero and that $\Delta \geq 0$ is a $\bQ$-divisor such that $\sR \big( X, -(K_X + \Delta) \big)$ is finitely generated except at an isolated collection of points. Suppose $\fra \subseteq \O_X$ is a nonzero coherent ideal sheaf. Then the jumping numbers of $(X, \Delta, \fra)$ have no limit points.
\end{theoremB*}

We would like to point out that the question whether $F$-jumping numbers are always rational is still open. However the characteristic zero counterexample mentioned above suggests that maybe one should expect a negative answer.

The method used to prove these results builds upon \cite{UrbinatiDiscrepanciesOfNonQGorensteinVars} and \cite{GrafJumpingCoefficientsOfNonQGor}.
In particular, we prove global generation of (Frobenius pushforwards of) sheaves used to compute test ideals after twisting by a sufficiently ample divisor $H$.
If $X$ is projective, discreteness of $F$-jumping numbers follows quickly, since the twisted test ideals are globally generated by vector subspaces within the finite-dimensional vector space $H^0(X, \O_X(H))$.
The general case is easily reduced to the projective case by a compactification argument.

Using these same methods, we also obtain a generalization of the Hartshorne--Speiser--Lyubeznik--Gabber stabilization theorem.  Let us motivate the result briefly. Notice that if $R$ is a ring, we have canonical maps $\Hom_R(F^e_* R, R) \to R$ obtained by evaluation at~$1$.  These images yield a descending chain of ideals $J_e$.  If $K_R$ is Cartier, it follows from \cite{HartshorneSpeiserLocalCohomologyInCharacteristicP,LyubeznikFModulesApplicationsToLocalCohomology,Gabber.tStruc} that these images stabilize, giving a canonical scheme structure to the non-$F$-pure locus of $X = \Spec R$.
Blickle and B\"ockle also proved a related stabilization result for arbitrary rings (and even more) \cite{BlickleBockleCartierModulesFiniteness,BlickleTestIdealsViaAlgebras} but their result does not seem to imply that $J_e = J_{e+1}$ for $e \gg 0$ (Blickle obtained another result which implies stabilization of a different set of smaller ideals).  However, as a corollary of our work, we obtain the following generalization, also see \cite[Proposition 3.7]{ChiecchioEnescuMillerSchwede}.

\begin{theoremC*}[HSLG-type stabilization, \autoref{thm.HSLG}]
Suppose that $X$ is a normal variety over an $F$-finite field $k$ of characteristic $p > 0$.  Set
\[
J_e := \Image \Big(F^e_* \O_X \big( (1 - p^e) K_X \big) \cong \sHom_{\O_X}(F^e_* \O_X, \O_X) \xrightarrow{\textnormal{eval@1}} \O_X \Big) \subset \O_X.
\]
If $\sR(X, -K_X)$ is finitely generated except at an isolated collection of points, then $J_e = J_{e+1}$ for all $e \gg 0$.
\end{theoremC*}

\begin{remark}
There should be a more general version of \autoref{thm.HSLG} with $\sR \big( X, -(K_X + \Delta) \big)$ in place of $\sR(X, -K_X)$, but for the proof one would probably need to generalize \autoref{thm.AsymptoticGG} further.
\end{remark}

We end the introduction by pointing out some geometric and cohomological conditions on the singularities of $X$ which ensure that our assumptions on the anticanonical algebra of $X$ are satisfied.

\begin{proposition}[Klt or rational singularities and finite generation] \label{prp.klt fg}
Let $X$ be a normal variety over a field $k$. Assume any of the following:
\begin{enumerate}[leftmargin=*, itemsep=1ex]
\item\label{itm.klt fg.1} $\Char(k) = 0$ and there is a $\bQ$-divisor $D \ge 0$ such that the pair $(X, D)$ is klt except at an isolated collection of points.
\item\label{itm.klt fg.3} $\dim X \le 3$ and $X$ has pseudorational singularities except at an isolated collection of points.
\end{enumerate}
Then for any $\bQ$-Weil divisor $B$ on $X$, the algebra of local sections $\sR(X, B)$ is finitely generated except at an isolated collection of points.
Hence the assumptions of Theorems~\ref{thm.discrete isol},~\ref{thm.discrete klt} and~\ref{thm.HSLG} are satisfied in this case.
\end{proposition}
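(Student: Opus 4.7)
Finite generation of $\sR(X, B)$ outside an isolated collection of points is local on $X$. Thus it suffices, for any point $x \in X$ at which the hypothesis of \ref{itm.klt fg.1} or \ref{itm.klt fg.3} holds, to produce an open neighborhood $U$ of $x$ on which $\sR(U, B|_U)$ is a finitely generated $\O_U$-algebra; the complement of the bad points is then covered by finitely many such neighborhoods. We shrink $X$ and work near such an $x$.

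For \ref{itm.klt fg.1}, after shrinking we may assume $(X, D)$ is klt on all of $X$. By the main results of Birkar--Cascini--Hacon--McKernan, there exists a small projective birational $\bQ$-factorialization $\pi \colon \wt X \to X$. Let $\wt B$ denote the proper transform of $B$; since $\wt X$ is $\bQ$-factorial, $\wt B$ is $\bQ$-Cartier of some index $N > 0$. Then
\[
\sR(\wt X, \wt B) \;=\; \bigoplus_{m \ge 0}\O_{\wt X}(\rd m\wt B.)
\]
is a finitely generated $\O_{\wt X}$-algebra: its $N$-th Veronese is the ordinary Rees algebra of the invertible sheaf $\O_{\wt X}(N\wt B)$, and the finitely many graded pieces $\O_{\wt X}(\rd i\wt B.)$ for $0 \le i < N$ span $\sR(\wt X, \wt B)$ over that Veronese. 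Because $\pi$ is small, reflexive pushforward gives $\pi_* \O_{\wt X}(\rd m \wt B.) = \O_X(\rd m B.)$ in each degree, so $\pi_* \sR(\wt X, \wt B) = \sR(X, B)$; since $\pi$ is projective, this pushforward is finitely generated as an $\O_X$-algebra, proving the first case.

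For \ref{itm.klt fg.3} in $\dim X \le 2$, a pseudorational normal surface singularity has finite local divisor class group by a theorem of Lipman, so some positive multiple $rB$ is Cartier near $x$; then $\sR(X, B)$ is spanned over the Rees algebra of $\O_X(rB)$ by the finitely many coherent sheaves $\O_X(\rd iB.)$ with $0 \le i < r$, and is therefore finitely generated. For $\dim X = 3$ the argument is more delicate: in characteristic zero one verifies that three-dimensional pseudorational singularities are of klt type, reducing to \ref{itm.klt fg.1}; in positive characteristic one argues directly, via a resolution $\pi\colon Y \to X$ and the vanishing property built into pseudorationality, to control the sheaves $\pi_* \O_Y(\rd m \pi^* B.)$ and extract finite generation.

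\textbf{Main obstacle.} The three-dimensional sub-case of \ref{itm.klt fg.3}, especially in positive characteristic where BCHM is unavailable, is the delicate step; everything else is a routine combination of the MMP with standard facts about small birational modifications and rank-one reflexive sheaves.
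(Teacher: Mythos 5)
Your case~\ref{itm.klt fg.1} has a genuine gap at the decisive step. After passing to a small $\bQ$-factorialization $\pi\colon \wt X \to X$ and noting that the strict transform $\wt B$ is $\bQ$-Cartier, you assert that $\pi_*\sR(\wt X,\wt B)=\sR(X,B)$ is a finitely generated $\O_X$-algebra ``since $\pi$ is projective.'' This is false: the pushforward of a finitely generated graded $\O_{\wt X}$-algebra under a projective birational map is \emph{not} automatically finitely generated over $\O_X$ --- this is precisely the Zariski finiteness problem, and $\wt B$ being $\bQ$-Cartier does not make it $\pi$-nef or $\pi$-semiample, so no relative base-point-free theorem applies. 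Finite generation of relative section rings is exactly what the MMP is needed for, and it only holds here because the divisor can be put into \emph{adjoint} form for a klt pair. The paper's argument does this: shrink so that $B\ge 0$ and $K_X+D\sim_\bQ 0$, take the small $\bQ$-factorialization $\pi\colon Y\to X$, observe that $(Y,\pi_*^{-1}(D+\varepsilon B))$ is klt for small rational $\varepsilon>0$, and then invoke \cite[Theorem~1.2(3)]{BirkarCasciniHaconMcKernan} --- the finite generation of relative log canonical rings of klt pairs --- to conclude that $\sR(X,K_X+D+\varepsilon B)$ is finitely generated; since $K_X+D\sim_\bQ 0$ and $\varepsilon\in\bQ$, this algebra has the same Veronese as $\sR(X,B)$, and one concludes by \cite{GotoHerrmannNishidaVillamayor}. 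The klt hypothesis is used for the deep finite generation theorem, not just for the existence of the $\bQ$-factorialization.

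Your case~\ref{itm.klt fg.3} is largely hand-waving at exactly the step you flag as ``delicate,'' and the route you gesture at (pseudorational threefolds being of klt type in characteristic zero, plus an unspecified direct argument in positive characteristic) is both unnecessary and dubious --- pseudorational, or even rational, threefold singularities need not be of klt type. The paper's argument is simpler and characteristic-independent: since $X$ is normal of dimension $\le 3$, $\Sing(X)$ has dimension $\le 1$, and the statement allows failure at isolated points, so it suffices to show that $B$ is $\bQ$-Cartier at the generic point $\frp$ of each $1$-dimensional component of $\Sing(X)$. Localizing at $\frp$ gives a two-dimensional pseudorational germ $U=\Spec R_\frp$; a Grothendieck spectral sequence for $\Gamma_\frm\circ\pi_*$ applied to a resolution $\wt U\to U$, combined with \cite[Theorem~2.4]{LipmanDesingularizationOf2Dimensional} and the injectivity built into pseudorationality, yields $H^1(\wt U,\O_{\wt U})=0$, so $U$ is rational in Lipman's sense. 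Then \cite[Proposition~17.1]{LipmanRationalSingularities} gives torsion local class group, so $B$ is $\bQ$-Cartier near $\frp$. Your surface statement, that ``a pseudorational normal surface singularity has finite local divisor class group by a theorem of Lipman,'' is essentially correct but silently compresses exactly this pseudorational-implies-rational step, which is the real content of the paper's case~\ref{itm.klt fg.3}.
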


For the definition of \emph{pseudorational singularities}, see~\cite[Section~2, p.~102]{LipmanTeissierPseudoRational}.
An equivalent condition, emphasizing the point of view of extendability of differential forms, is given in~\cite[Section~4, Corollary on p.~107]{LipmanTeissierPseudoRational}.
From the latter condition, it is easy to see that for a klt threefold pair $(X, D)$, the space $X$ has pseudorational singularities except at an isolated collection of points since $X$ is Cohen--Macaulay in codimension two anyways.

\subsection*{Acknowledgements}

The authors first began working on this project at the 2015 Summer Research Institute on Algebraic Geometry held at the University of Utah.
Furthermore they would like to thank the referee for helpful suggestions.

\section{Preliminaries}

\begin{convention}
Throughout this paper, all schemes are Noetherian and separated and of finite type over a field which in characteristic $p > 0$ is always assumed to be $F$-finite.
In characteristic $p > 0$, $F\colon X \to X$ denotes the absolute Frobenius map, acting on an affine scheme $U = \Spec R$ by $r \mapsto r^p$.
\end{convention}

The material in this section is mostly well-known to experts, and collected for convenience of the reader.

\subsection{Grothendieck duality}

We will use the following special case of Grothendieck duality~\cite[Proposition~5.67]{KollarMori}. Let $f\colon X \to Y$ be a finite map and $\sF, \sG$ coherent sheaves on $X$ and on $Y$, respectively. Then there is a natural $f_* \O_X$-linear isomorphism
\begin{equation} \label{eq.GD}
\sHom_{\O_Y}(f_* \sF, \sG) = f_* \sHom_{\O_X}(\sF, f^! \sG),
\end{equation}
where $f^! \sG := \sHom_{\O_Y}(f_* \O_X, \sG)$.
Furthermore we will use the fact that if $X$ is essentially of finite type over an $F$-finite field, then $F^! \omega_X \cong \omega_X$ where $\omega_X$ is the canonical sheaf of $X$.

Suppose now that $X$ is a normal integral scheme of finite type over an $F$-finite field of characteristic $p > 0$.
Then we have a canonical map (called the trace map)
\[ F^e_* \omega_X \to \omega_X \]
which under~\autoref{eq.GD} corresponds to $\id \in F^e_* \sHom_{\O_X}(\omega_X, \omega_X)$.
Let $K_X$ be a canonical divisor on $X$.
Twisting by $\O_X(-K_X)$ and reflexifying yields
\[ F^e_* \O_X \big( (1 - p^e) K_X \big) \to \O_X \]
and then for any effective Weil divisor $D \ge 0$ by restriction we obtain a map
\begin{equation} \label{eq.trace}
\tr\colon F^e_* \O_X \big( (1 - p^e) K_X - D \big) \to \O_X.
\end{equation}
Using~\autoref{eq.GD} again, the left-hand side sheaf is identified with $\sHom_{\O_X} \big( F^e_* \O_X(D), \O_X \big)$.
It is straightforward to check that under this identification,~\autoref{eq.trace} becomes the ``evaluation at~$1$'' map
\[ \sHom_{\O_X} \big( F^e_* \O_X(D), \O_X \big) \xrightarrow{\textnormal{eval@1}} \O_X. \]

\subsection{Test ideals}

We recall the following definition of test ideals from the literature.

\begin{definition}[\protect{\cite[Lemma 2.1]{HaraTakagiOnAGeneralizationOfTestIdeals}, \cite[Proof of 3.18]{SchwedeTestIdealsInNonQGor}}]
If $X$ is a normal $F$-finite scheme, $\Delta \geq 0$ is a $\bQ$-divisor on $X$, $\fra$ is an ideal sheaf and $t \geq 0$ is a real number then for any sufficiently large effective Weil divisor $C$, we define the sheaf
\[ \tau(X, \Delta, \fra^t) := \sum_{e \geq 0} \, \sum_{\phi \in \sC_{\Delta}^{e}} \phi \Big( F^e_* \big( \fra^{\lceil t(p^e-1)\rceil} \cdot \O_X(-C) \big) \Big) \]
where $\sC_{\Delta}^e = \sHom_{\O_X} \big( F^e_* \O_X \big( \lceil (p^e - 1) \Delta \rceil \big), \O_X \big)$.
\end{definition}

\begin{remark} \label{rem.GlobalDefnTau}
The choice of $C$ is philosophically the same as the choice of a test element usually included in the (local) literature mentioned above.
Note that for any affine chart $U = \Spec R \subseteq X$, if $c \in R$ is an appropriate test element and $C$ is a Weil divisor on $X$ such that $\O_X(-C)\big|_U \subseteq c \cdot \O_U$, then one can always find another test element $d \in R$ with $d \cdot \O_U \subseteq \O_X(-C)\big|_U$.
It follows that our definition of the test ideal is indeed independent of the choice of $C$.
\end{remark}

\begin{lemma}
Notation as above. Then for any $e_0 \ge 0$, we have
\[ \tau(X, \Delta, \fra^t) = \sum_{e \ge e_0} \, \sum_{\phi \in \sC_{\Delta}^{e}} \phi \Big( F^e_* \big( \fra^{\lceil t(p^e-1)\rceil} \cdot \O_X(-C) \big) \Big). \]
\end{lemma}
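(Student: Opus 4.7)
The containment $\supseteq$ is immediate, since restricting the range of the outer sum from $\{e \ge 0\}$ to $\{e \ge e_0\}$ can only shrink the right-hand side. For the reverse inclusion, it suffices to show that each ``missing'' summand
\[
\phi\bigl(F^{e'}_*(\fra^{\lceil t(p^{e'}-1)\rceil}\cdot\O_X(-C))\bigr), \qquad 0 \le e' < e_0,\ \phi \in \sC_\Delta^{e'},
\]
lies in the right-hand side. The strategy is the standard trick of promoting $\phi$ to an element of $\sC_\Delta^{e''}$ for some $e'' \ge e_0$ by composing it with a well-chosen higher-Frobenius Cartier map.

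Concretely, for any $e'' \ge e'$ we have a composition pairing $\sC_\Delta^{e'} \times \sC_\Delta^{e''-e'} \to \sC_\Delta^{e''}$ sending $(\phi, \psi) \mapsto \phi \circ F^{e'}_*\psi$. That the composition indeed lies in $\sC_\Delta^{e''}$ follows from the elementary componentwise estimate
\[
\lceil (p^{e''}-1)\Delta\rceil \;\le\; p^{e'}\lceil (p^{e''-e'}-1)\Delta\rceil + \lceil (p^{e'}-1)\Delta\rceil,
\]
which produces the required inclusion of domains. Now choose $e'' \ge e_0$ with $e'' - e'$ sufficiently large, and select $\psi \in \sC_\Delta^{e''-e'}$ together with a local section $c$ (lying in a high power of $\fra$) such that $\psi(F^{e''-e'}_*c) = 1$; the existence of such a pair is the standard existence of test elements, and may be arranged after possibly enlarging $C$ as discussed in \autoref{rem.GlobalDefnTau}.

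Given any local section $r$ of $\fra^{\lceil t(p^{e'}-1)\rceil}\O_X(-C)$, the $p^{e''-e'}$-linearity of $\psi$ yields
\[
(\phi \circ F^{e'}_*\psi)\bigl(F^{e''}_*(r^{p^{e''-e'}}\cdot c)\bigr) \;=\; \phi\bigl(F^{e'}_*(r \cdot \psi(F^{e''-e'}_*c))\bigr) \;=\; \phi(F^{e'}_*\,r),
\]
which exhibits the target value as a summand on the right-hand side, provided $r^{p^{e''-e'}}\cdot c \in \fra^{\lceil t(p^{e''}-1)\rceil}\O_X(-C)$. The divisor condition is automatic since $r^{p^{e''-e'}}$ vanishes along $C$ to order at least $p^{e''-e'} \ge 1$ and $c$ is a regular section. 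For the ideal condition, the discrepancy $\lceil t(p^{e''}-1)\rceil - p^{e''-e'}\lceil t(p^{e'}-1)\rceil$ is bounded by a constant depending only on $t$, $p$, $e'$ and $e''$, so is absorbed by requiring $c$ to lie in a sufficiently high power of $\fra$.

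The main technical step is producing the pair $(\psi, c)$ with the required surjectivity property $\psi(F^{e''-e'}_*c) = 1$. This is essentially the global existence of test elements in the $F$-finite normal setting, and ultimately relies on the freedom to enlarge $C$ without changing $\tau(X, \Delta, \fra^t)$ that is recalled in \autoref{rem.GlobalDefnTau}; once that is in hand, the argument above completes the proof.
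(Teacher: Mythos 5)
The $\supseteq$ direction and the overall scaffolding are fine, but the key step of your argument has a genuine gap: you assert, invoking ``the standard existence of test elements,'' that one can find $\psi \in \sC_\Delta^{e''-e'}$ and $c$ in a high power of $\fra$ with $\psi(F^{e''-e'}_* c) = 1$. This is not available in general. The equation $\psi(F^{e}_* c) = 1$ forces $\psi$ to be surjective; for a local $F$-finite ring one checks (e.g.\ via Fedder-type criteria) that a surjective map in $\sC_\Delta^{e}$ exists if and only if $(X, \Delta)$ is $F$-pure at that point, and requiring additionally that $c$ lie deep inside $\fra$ is even more restrictive. None of this is assumed in the lemma, which is a formal statement about all normal $F$-finite pairs. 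Test elements in the usual sense give you a $c$ such that the sum $\sum_e \sum_\phi \phi(F^e_*(c\,\cdot\,))$ recovers $\tau$, not a splitting $\psi(F^e_*c) = 1$; and enlarging $C$, which is the only flexibility \autoref{rem.GlobalDefnTau} offers, does not change the local ring and cannot manufacture such a splitting.

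The paper avoids $F$-purity issues entirely by exploiting the independence of the definition from $C$: replace $C$ by $C' = C + p^{e_0-1} C_1$, where $C_1$ is a Cartier divisor chosen so that $\O_X(-C_1)$ is contained in the right-hand side. For $e < e_0$ one has $p^{e_0-1} \ge p^e$, so $p^{e}$-linearity of $\phi$ lets you pull $\O_X(-C_1)$ out of the summand, showing each low-index term (computed with $C'$) lands in $\O_X(-C_1) \subseteq$ RHS; the terms with $e \ge e_0$ only shrink when $C$ is enlarged. No composition of Cartier maps or splitting is needed. If you want to salvage your ``promote $\phi$ to level $e''$\,'' idea, you would have to content yourself with $\psi(F^{e''-e'}_* c) = d$ for some nonzero $d$ and then absorb the extra factor into an enlarged $C$ --- at which point you have essentially reproduced the paper's argument.
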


\begin{proof}
The inclusion ``$\supseteq$'' is clear.
For ``$\subseteq$'', choose $C_1$ a sufficiently large Cartier divisor such that $\O_X(-C_1)$ is contained in the right-hand side, and put $C' = C + p^{e_0 - 1} C_1$.
Then we see that
\[ \sum_{e \ge 0} \, \sum_{\phi \in \sC_{\Delta}^{e}} \phi \Big( F^e_* \big( \fra^{\lceil t(p^e-1)\rceil} \cdot \O_X(-C') \big) \Big) \subseteq
\sum_{e \ge e_0} \, \sum_{\phi \in \sC_{\Delta}^{e}} \phi \Big( F^e_* \big( \fra^{\lceil t(p^e-1)\rceil} \cdot \O_X(-C) \big) \Big) \]
since we can split up the sum on the left-hand side as $\sum_{e=0}^{e_0-1} (\,\cdots) + \sum_{e \ge e_0} (\,\cdots)$.
But the left-hand side is just $\tau(X, \Delta, \fra^t)$, hence the lemma is proved.
\end{proof}

If $C$ is in fact Cartier, an easy direct computation yields
\begin{align*}
& \sum\nolimits_{\phi \in \sC_{\Delta}^{e}} \phi \Big( F^e_* \big( \fra^{\lceil t(p^e-1)\rceil} \cdot \O_X(-C) \big) \Big) \\
= \; & \Image \Big[ \big( F^e_* \fra^{\lceil t(p^e - 1) \rceil} \big) \cdot \sHom_{\O_X} \big( F^e_* \O_X \big( \lceil (p^e - 1)\Delta\rceil + C \big), \O_X \big) \xrightarrow{\textnormal{eval@1}} \O_X \Big],
\end{align*}%
hence
\begin{align*}
& \tau(X, \Delta, \fra^t) \\
= \; & \sum\nolimits_{e \ge e_0} \Image \Big[ \big( F^e_* \fra^{\lceil t(p^e - 1) \rceil} \big) \cdot \sHom_{\O_X} \big( F^e_* \O_X \big( \lceil (p^e - 1)\Delta\rceil + C \big), \O_X \big) \xrightarrow{\textnormal{eval@1}} \O_X \Big] \\
= \; & \sum\nolimits_{e \ge e_0} \Image \Big[ F^e_* \Big( \fra^{\lceil t(p^e - 1) \rceil} \cdot \O_X \big( (1 - p^e) K_X - \lceil (p^e - 1) \Delta \rceil - C \big) \Big) \xrightarrow{\;\;\tr\;\;} \O_X \Big] \\
= \; & \sum\nolimits_{e \ge e_0} \Image \Big[ F^e_* \Big( \fra^{\lceil t(p^e - 1) \rceil} \cdot \O_X \big( \rd (1 - p^e) (K_X + \Delta). - C \big) \Big) \xrightarrow{\;\;\tr\;\;} \O_X \Big]. \\
\end{align*}%
However, for our purposes this definition of the test ideal is not quite optimal.
Fortunately, this is easy after adjusting $C$.

\begin{lemma} \label{lem.test ideal}
With notation as above, assume that $C$ is Cartier. Then for any $e_0 \ge 0$ we have
\[ \tau(X, \Delta, \fra^t) = \sum_{e \ge e_0} \Image \Big[ F^e_* \Big( \fra^{\lceil t p^e \rceil} \cdot \O_X \big( \rd (1 - p^e) (K_X + \Delta). - C \big) \Big) \xrightarrow{\;\;\tr\;\;} \O_X \Big]. \]
\end{lemma}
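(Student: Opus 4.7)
The plan is to prove the two inclusions separately, absorbing the change from $\lceil t(p^e-1) \rceil$ to $\lceil tp^e \rceil$ into the choice of the Cartier divisor $C$. The conceptual ingredient is that by \autoref{rem.GlobalDefnTau} (and the preceding lemma), $\tau(X, \Delta, \fra^t)$ is unchanged if $C$ is enlarged by any effective Cartier divisor; the arithmetic ingredient is the subadditivity estimate $\lceil tp^e \rceil \le \lceil t(p^e-1) \rceil + \lceil t \rceil$, which follows from $tp^e = t(p^e-1) + t$ and $\lceil a + b \rceil \le \lceil a \rceil + \lceil b \rceil$.

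For the inclusion $\supseteq$, I would simply observe that $\lceil tp^e \rceil \ge \lceil t(p^e-1) \rceil$ whenever $t \ge 0$, so $\fra^{\lceil tp^e \rceil} \subseteq \fra^{\lceil t(p^e-1) \rceil}$, and hence each summand on the right-hand side of the desired identity is contained in the corresponding summand of the formula for $\tau(X, \Delta, \fra^t)$ derived immediately before the lemma statement.

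For the reverse inclusion, I would first produce an effective Cartier divisor $C_1$ with $\O_X(-C_1) \subseteq \fra^{\lceil t \rceil}$; on any affine chart this is trivial (take $C_1 = \Div(f)$ for a nonzero $f \in \fra^{\lceil t \rceil}$), and globally one imitates the trick used in the proof of the preceding lemma. Then $C' := C + C_1$ is again an effective Cartier divisor large enough to compute $\tau(X, \Delta, \fra^t)$, and for each $e \ge e_0$ one has
\begin{align*}
\fra^{\lceil t(p^e-1) \rceil} \cdot \O_X \big( \rd (1-p^e)(K_X+\Delta). - C' \big) & \subseteq \fra^{\lceil t(p^e-1) \rceil + \lceil t \rceil} \cdot \O_X \big( \rd (1-p^e)(K_X+\Delta). - C \big) \\
& \subseteq \fra^{\lceil t p^e \rceil} \cdot \O_X \big( \rd (1-p^e)(K_X+\Delta). - C \big),
\end{align*}
where the first step uses $\O_X(-C_1) \subseteq \fra^{\lceil t \rceil}$ and the second is the ceiling inequality above. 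Applying $F^e_*$ and the trace map and summing over $e \ge e_0$ then yields $\tau(X, \Delta, \fra^t) \subseteq$ (right-hand side), completing the proof.

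The main delicate point I anticipate is the global existence of the auxiliary Cartier divisor $C_1$, since the paper works with varieties that need not be projective; however, this is essentially the same move already performed in the proof of the preceding lemma, so I expect it to pose no serious difficulty. Everything else is elementary ceiling-function bookkeeping together with the trace formalism developed earlier in this section.
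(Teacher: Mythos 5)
Your proof is correct and follows essentially the same approach as the paper: both hinge on the containment $\fra^{\lceil t\rceil}\fra^{\lceil t(p^e-1)\rceil} \subseteq \fra^{\lceil tp^e\rceil}$ and on absorbing the difference into an enlargement of $C$. The one point you flagged as potentially delicate (the global existence of $C_1$ with $\O_X(-C_1) \subseteq \fra^{\lceil t\rceil}$) is sidestepped in the paper by immediately reducing to the affine case via \autoref{rem.GlobalDefnTau} and taking a single element $b \in \fra^{\lceil t\rceil}$ in place of $C_1$, which is cleaner than working globally.
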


\begin{proof}
Without loss of generality, and in view of \autoref{rem.GlobalDefnTau}, we may assume that $X = \Spec R$ is affine and that $\O_X(-C) = c \cdot \O_X$ for some $c \in R$.
Choose $b \in \fra^{\lceil t \rceil}$ nonzero, so that $b \fra^{\lceil t(p^e - 1) \rceil} \subseteq \fra^{\lceil t \rceil}\fra^{\lceil t(p^e - 1) \rceil} \subseteq \fra^{\lceil t p^e \rceil}$.
Replacing $C = \Div(c)$ by $\Div(bc)$ we obtain our desired formula.
\end{proof}

\subsection{Multiplier ideals}

In this section we work over a field of characteristic zero.
The theory of non-$\bQ$-Gorenstein multiplier ideals was developed by~\cite{DeFernexHaconSingsOnNormal}.
The starting point is a notion of pullback for Weil divisors.

\begin{definition}[\protect{\cite[Def.~2.6]{DeFernexHaconSingsOnNormal}}]
Let $f\!: Y \to X$ be a proper birational morphism between normal varieties, and let $D$ be an integral Weil divisor on $X$. Then the \emph{natural pullback} $f^\ntl D$ of $D$ along $f$ is defined by
\[ \O_Y(-f^\ntl D) = \big( \O_X(-D) \cdot \O_Y \big)^{**}, \]
where we consider $\O_X(-D) \subset \mathcal K_X$ as a fractional ideal sheaf on $X$.
\end{definition}

We will consider \emph{triples} $(X, \Delta, \fra^t)$ consisting of a normal variety $X$, an effective $\bQ$-divisor $\Delta \ge 0$, a nonzero coherent ideal sheaf $\fra \subset \O_X$ and a real number $t \ge 0$. In the case $\Delta = 0$, the following definition was made in~\cite[Definition~4.8]{DeFernexHaconSingsOnNormal}.

\begin{definition}[\protect{\cite[Def.~2.19]{ChiecchioEnescuMillerSchwede}}]
Let $(X, \Delta, \fra^t)$ be a triple, and let $m \in \bN$ be a positive integer such that $m\Delta$ is integral.
Let $f\colon Y \to X$ be a log resolution of the pair $\big( X, \O_X(-m(K_X + \Delta)) + \fra \big)$ in the sense of~\cite[Definition~4.1]{DeFernexHaconSingsOnNormal}.
Let $Z$ be the Cartier divisor on $Y$ such that $\fra \cdot \O_Y = \O_Y(-Z)$. Then we define
\[ \mJ_m(X, \Delta, \fra^t) := f_* \O_Y \Big( \big\lceil K_Y - \textstyle\frac 1m f^\ntl \big( m(K_X + \Delta) \big) - tZ \big\rceil \Big). \]
\end{definition}

One shows that this is a coherent ideal sheaf on $X$, independent of the choice of the resolution $f$.
Furthermore, $\mJ_m(X, \Delta, \fra^t) \subset \mJ_{km}(X, \Delta, \fra^t)$ for any integer $k > 0$.
Thus by the Noetherian property of $X$, the following definition makes sense.

\begin{definition}
The multiplier ideal $\mJ(X, \Delta, \fra^t)$ of a triple as above is defined to be the unique maximal element of the family
\[ \big\{ \mJ_m(X, \Delta, \fra^t) \mid \text{$m \ge 1$ and $m\Delta$ is integral} \big\}, \]
i.e.~it is equal to $\mJ_m(X, \Delta, \fra^t)$ for $m$ sufficiently divisible.
\end{definition}

We will need the following notion of \emph{compatible boundaries}, which is a straightforward generalization of the $\Delta = 0$ case in~\cite[Definition~5.1]{DeFernexHaconSingsOnNormal}.

\begin{definition} \label{def.compatible}
Let $(X, \Delta, \fra^t)$ be a triple, and fix an integer $m \ge 2$ such that $m\Delta$ is integral.
Given a log resolution $f\colon Y \to X$ of $\big( X, \O_X(-m(K_X + \Delta)) + \fra \big)$, a $\bQ$-Weil divisor $\Delta'$ on $X$ is called \emph{$m$-compatible} for $(X, \Delta, \fra^t)$ with respect to $f$ if the following hold:
\begin{enumerate}
\item[(i)]   $K_X + \Delta + \Delta'$ is $\bQ$-Cartier, \\[-.75em]
\item[(ii)]  $m\Delta'$ is integral and $\rd \Delta'. = 0$, \\[-.75em]
\item[(iii)] no component of $\Delta'$ is contained in $\supp(\Delta) \cup \supp(\O_X/\fra)$, \\[-.75em]
\item[(iv)]  $f$ is a log resolution of $\big( (X, \Delta + \Delta'), \O_X(-m(K_X + \Delta)) + \fra \big)$, \\[-.75em]
\item[(v)]   $K_Y + f^{-1}_* \Delta' - f^*(K_X + \Delta + \Delta') = K_Y - \textstyle\frac 1m f^\ntl \big( m(K_X + \Delta) \big)$.
\end{enumerate}
\end{definition}

\begin{proposition}
Let $(X, \Delta, \fra^t)$ be a triple, and fix an integer $m \ge 2$ such that $m\Delta$ is integral and $\mJ(X, \Delta, \fra^t) = \mJ_m(X, \Delta, \fra^t)$.
Then for any $m$-compatible boundary $\Delta'$ we have
\[ \mJ(X, \Delta, \fra^t) = \mJ \big( (X, \Delta + \Delta'); \fra^t \big), \]
where the right-hand side is a multiplier ideal in the usual $\bQ$-Gorenstein sense~\cite[Definition~9.3.60]{LazarsfeldPositivity2}.
\end{proposition}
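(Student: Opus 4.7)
The plan is to show that both multiplier ideals in the claim can be computed from the same log resolution $f\colon Y \to X$ as the pushforward of the round-up of the \emph{same} integral divisor on $Y$. I would begin by invoking the compatibility identity~(v) to rewrite the $\bQ$-divisor appearing in the definition of $\mJ_m(X,\Delta,\fra^t)$ as
\[ K_Y - \tfrac{1}{m}\, f^\ntl\big(m(K_X + \Delta)\big) - tZ \;=\; \big( K_Y - f^*(K_X + \Delta + \Delta') - tZ \big) + f^{-1}_*\Delta'. \]
Note that $K_X + \Delta + \Delta'$ is $\bQ$-Cartier by~(i), so its pullback is defined, and $f$ is a log resolution of $\big((X,\Delta+\Delta'),\fra\big)$ by~(iv), so that the parenthesized expression on the right-hand side is precisely what appears in the usual $\bQ$-Gorenstein formula for $\mJ\big((X,\Delta+\Delta');\fra^t\big)$ computed on $f$.

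Next I would verify that the extra summand $f^{-1}_*\Delta'$ does not change the round-up. Away from $\supp(f^{-1}_*\Delta')$ this is automatic. The essential case is a prime divisor $E = f^{-1}_* D$ where $D$ is a component of $\Delta'$. Condition~(iii) gives $D \not\subseteq \supp(\Delta) \cup \supp(\O_X/\fra)$, whence $\ord_E(Z) = 0$ and $\ord_D(\Delta+\Delta') = \ord_D(\Delta')$. The standard log-discrepancy calculation for non-exceptional divisors then yields
\[ \ord_E\big(K_Y - f^*(K_X + \Delta + \Delta')\big) \;=\; -\ord_D(\Delta'). \]
So at $E$ the right-hand divisor has coefficient $-\ord_D(\Delta')$, whereas adding $f^{-1}_*\Delta'$ shifts this to $0$. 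By~(ii) we have $0 < \ord_D(\Delta') < 1$, hence $-\ord_D(\Delta') \in (-1,0)$, and \emph{both} coefficients round up to $0$.

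Combining these observations, the round-ups of the two $\bQ$-divisors on $Y$ coincide as integral divisors, and pushing forward under $f_*$ gives
\[ \mJ_m(X,\Delta,\fra^t) \;=\; \mJ\big( (X,\Delta+\Delta');\,\fra^t \big). \]
The hypothesis $\mJ(X,\Delta,\fra^t) = \mJ_m(X,\Delta,\fra^t)$ then closes the argument.

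The only real obstacle is the local round-up comparison at strict transforms of components of $\Delta'$. The conditions defining an $m$-compatible boundary are essentially tailor-made for this to succeed: without $\lfloor\Delta'\rfloor = 0$ the coefficient $-\ord_D(\Delta')$ could reach $-1$ or below, shifting the round-up; without the disjointness of $\supp\Delta'$ from $\supp\Delta \cup \supp(\O_X/\fra)$, extra negative contributions from $\Delta$ or from $Z$ would spoil the accounting. Once this check is in hand, the rest is bookkeeping.
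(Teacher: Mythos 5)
Your proof is correct and is essentially the argument the paper has in mind: the paper declares the proof analogous to Proposition~5.2 of de~Fernex--Hacon and omits it, and that proposition is proved exactly by the decomposition and round-up comparison you carry out. Your verification that the extra term $f^{-1}_*\Delta'$ is harmless (using~(ii) for $0 < \ord_D(\Delta') < 1$ and~(iii) to eliminate contributions from $\Delta$ and $Z$) is the right bookkeeping, and you correctly note that~(iv) guarantees $f$ is usable as a log resolution for the $\bQ$-Gorenstein side as well.
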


\begin{proof}
The proof is analogous to~\cite[Proposition~5.2]{DeFernexHaconSingsOnNormal}, and thus it is omitted.
\end{proof}

The existence of compatible boundaries is ensured by the following theorem, cf.~\cite[Theorem~5.4]{DeFernexHaconSingsOnNormal} and~\cite[Theorem~4.4]{GrafJumpingCoefficientsOfNonQGor}.
The \emph{Weil index} of a triple $(X, \Delta, \fra^t)$ is defined to be the smallest positive integer $m_0$ such that $m_0(K_X + \Delta)$ is integral.

\begin{theorem} \label{thm.compatible}
Let $(X, \Delta, \fra^t)$ be a triple of Weil index $m_0$, and let $k \ge 2$ be an integer.
Choose an effective Weil divisor $D$ on $X$ such that $m_0(K_X + \Delta) - D$ is Cartier, and let $\sL \in \Pic X$ be a line bundle such that $\sL(-kD) := \sL \tensor \O_X(-kD)$ is globally generated. Pick a finite-dimensional subspace $V \subset H^0 \big( X, \sL(-kD) \big)$ that generates $\sL(-kD)$, and let $M$ be the divisor of a general element of $V$. Then
\[ \Delta' := \frac{1}{km_0} M \]
is a $(km_0)$-compatible boundary for $(X, \Delta, \fra^t)$.
\end{theorem}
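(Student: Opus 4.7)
The plan is to verify each of the five conditions in \autoref{def.compatible} in turn, taking $m = km_0$. The crucial structural input is that $M$, being the zero locus of a section of the line bundle $\sL(-kD)$, is an effective Cartier divisor, and so $kD + M$ is Cartier as well since $\O_X(kD + M) \cong \sL$ is invertible. Writing $C_0 := m_0(K_X + \Delta) - D$ for the Cartier divisor guaranteed by hypothesis, condition (i) then follows from the decomposition
\[
km_0(K_X + \Delta + \Delta') = kC_0 + (kD + M)
\]
as a sum of Cartier divisors. Condition (ii) holds because $(km_0)\Delta' = M$ is integral and a general element of the base-point-free linear system $|V|$ is reduced, so all nonzero coefficients of $\Delta'$ equal $1/(km_0) \le 1/2$, forcing $\rd \Delta'. = 0$. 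For (iii), the base locus of $|V|$ is empty, so a general $M$ does not contain any of the finitely many divisorial components of $\supp \Delta \cup \supp(\O_X/\fra)$.

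For (iv) I would fix any log resolution $f\colon Y \to X$ of the original pair $\big(X, \O_X(-km_0(K_X + \Delta)) + \fra\big)$ and invoke a standard Bertini-type argument: the pullback of $V$ to $Y$ remains base-point-free, so for general $M$ the strict transform $f^{-1}_* M$ is smooth and meets the existing simple normal crossings divisor transversally, so that $f$ remains a log resolution of the augmented pair. Moreover, for general $M$ the centers of the (finitely many) exceptional divisors of $f$ are not contained in $M$, whence $f^*M = f^{-1}_* M = km_0 f^{-1}_* \Delta'$ with no exceptional contribution. This last equality will be essential for~(v).

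The principal obstacle is condition (v), which requires carefully disentangling the natural pullback of Weil divisors from the ordinary pullback of Cartier divisors. Writing $km_0(K_X + \Delta) = kC_0 + kD$, the identity $\O_X(-kC_0 - kD) \cong \O_X(-kC_0) \otimes \O_X(-kD)$ between reflexive sheaves (since tensoring by an invertible sheaf preserves reflexivity) yields the splitting $f^\ntl\big(km_0(K_X + \Delta)\big) = f^*(kC_0) + f^\ntl(kD)$. On the other hand, since $km_0(K_X + \Delta + \Delta') = kC_0 + (kD + M)$ is Cartier, its ordinary pullback is $f^*(kC_0) + f^*(kD + M)$. Applying the same tensor-with-invertible trick to $\O_X(-(kD + M)) \cong \O_X(-kD) \otimes \O_X(-M)$ and pulling back shows that $f^*\O_X(-kD)$ is already invertible, hence equal to $\O_Y(-f^\ntl(kD))$; this gives $f^*(kD + M) = f^\ntl(kD) + f^*M$. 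Subtracting the two expressions, substituting $f^*M = km_0 f^{-1}_* \Delta'$, and dividing by $km_0$ then recovers the equality in (v) exactly.
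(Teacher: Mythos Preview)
Your argument contains a genuine gap stemming from the assertion that ``$M$, being the zero locus of a section of the line bundle $\sL(-kD)$, is an effective Cartier divisor.'' The sheaf $\sL(-kD) = \sL \otimes \O_X(-kD)$ is in general only reflexive of rank one, not invertible: the whole point of the setup is that $D$ (and hence $kD$) need not be Cartier, and tensoring the non-invertible $\O_X(-kD)$ by the line bundle $\sL$ cannot cure this. Consequently $M$ is only an effective Weil divisor, and while $kD + M$ is indeed Cartier (linearly equivalent to the Cartier divisor underlying $\sL$), $M$ itself is typically not.

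This error propagates into your verification of~(v). The step ``$\O_X(-(kD+M)) \cong \O_X(-kD) \otimes \O_X(-M)$'' fails: the right-hand side is a tensor product of two non-invertible reflexive sheaves and need not be reflexive, let alone equal to the invertible left-hand side. (On the quadric cone with $D$ and $M$ two distinct rulings one computes the product of ideals $\O_X(-D)\cdot\O_X(-M) = z\cdot\frm \subsetneq (z) = \O_X(-(D+M))$.) Hence you cannot conclude $f^*(kD+M) = f^\ntl(kD) + f^*M$, and the symbol $f^*M$ itself is ill-defined as a Cartier pullback. The claim $f^*M = f^{-1}_*M$ for general $M$ likewise presupposes $M$ Cartier.

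The paper circumvents this by never separating $M$ from $kD$ downstairs. Instead it observes that, because $f$ is a log resolution of $\O_X(-kD)$ (this is automatic since $\O_X(-km_0(K_X+\Delta))$ differs from $\O_X(-kD)$ by an invertible factor), the inverse-image ideal $\O_X(-kD)\cdot\O_Y = \O_Y(-E)$ with $E = f^\ntl(kD)$ is invertible on $Y$. Then $f^*\sL \otimes \O_Y(-E)$ is an honest line bundle on $Y$, globally generated by the pullbacks of $V$, so by Bertini the divisor of a general pulled-back section is exactly $f^{-1}_*M$ with no exceptional contribution. This yields $f^*(kD+M) = f^{-1}_*M + E$ in one stroke, and your subtraction argument for (v) then goes through.
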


\begin{proof}
Let $f\colon Y \to X$ be a log resolution of $\big( (X, \Delta), \O_X(-km_0(K_X + \Delta)) + \O_X(-kD) + \fra \big)$, and set $E := f^\ntl(kD)$. Then
\[ f^\ntl \big( km_0(K_X + \Delta) \big) = f^\ntl \big( km_0(K_X + \Delta) - kD + kD \big) = km_0 \cdot f^* \big( K_X + \Delta - \textstyle\frac1{m_0} D \big) + E \]
and so we have
\[ K_Y - \textstyle\frac1{km_0} f^\ntl \big( km_0(K_X + \Delta) \big) = K_Y - f^* \big( K_X + \Delta - \frac1{m_0} D \big) - \frac1{km_0} E. \]
Let $M$ be the divisor of a general element of $V$. Then since $\sL(-kD)$ is generated by $V$, we see that $M$ is reduced with no component contained in $\supp(\Delta) \cup \supp(\O_X/\fra)$.
Put $G = M + kD$, a Cartier divisor. Since also $f^* \sL \tensor \O_Y(-E)$ is generated by the (pullbacks of the) sections in $V$, we have that $f^* G = f^{-1}_* M + E$.

Now set $\Delta' = \frac1{km_0} M$. Then by the above, conditions (ii)--(iv) of \autoref{def.compatible} are satisfied. Also it is clear that
\[ K_X + \Delta + \Delta' = K_X + \Delta - \textstyle\frac1{m_0} D + \frac1{km_0} G \]
is $\bQ$-Cartier, so (i) is fulfilled. To check (v), note that
\begin{align*}
& K_Y + f^{-1}_* \Delta' - f^*(K_X + \Delta + \Delta') \\
= \; & K_Y + f^{-1}_* \Delta' - f^* \big( K_X + \Delta + \Delta' - \textstyle\frac1{km_0} G \big) - \frac1{km_0} f^* G \\
= \; & K_Y - f^* \big( K_X + \Delta - \textstyle\frac1{m_0} D \big) - \frac1{km_0} E \\
= \; & K_Y - \textstyle\frac 1{km_0} f^\ntl \big( km_0(K_X + \Delta) \big).
\end{align*}%
This proves the theorem.
\end{proof}

\section{Global generation at isolated non-finitely generated points}

The following theorem is a positive characteristic version of~\cite[Theorem~7.1]{GrafJumpingCoefficientsOfNonQGor}.
But even in characteristic zero (cf.~\autoref{rem.Char0Remark}), the present result is stronger than that theorem.
Most notably, we remove the divisibility condition in~\cite[Theorem 7.1]{GrafJumpingCoefficientsOfNonQGor} and replace the ``$K_X + \Delta$ is $\bQ$-Cartier'' condition with the weaker requirement that $\sR \big( X, -(K_X + \Delta) \big)$ be finitely generated.

\begin{theorem} \label{thm.AsymptoticGG}
Let $X$ be a normal projective $d$-dimensional variety over an $F$-finite field $k$ of characteristic $p > 0$.
Further let $D$ be a Weil $\bQ$-divisor and $C$ a Cartier divisor on $X$.
Suppose that $W \subseteq X$ is a closed set such that $\sR(X, D) := \bigoplus_{m \geq 0} \O_X(\lfloor mD \rfloor)$ is finitely generated on $X \setminus W$, let $W_0$ be the set of isolated points of $W$ and put
\[ U = (X \setminus W) \cup W_0. \]
Then there exists an ample Cartier divisor $H$ on $X$ such that for all $e \ge 0$, $m \ge 0$, $\ell \ge \max \{ m, p^e \}$ and any nef Cartier divisor $N$, the sheaf
\[ F^e_* \O_X \big( \rd mD + \ell H. - C + N \big) \]
is globally generated on $U$ (as an $\O_X$-module).
Furthermore, for any ample Cartier divisor $H'$ on $X$ fixed in advance, $H$ can be taken to be a sufficiently high multiple of $H'$.
\end{theorem}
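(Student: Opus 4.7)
The plan is to combine three ingredients: the projection formula for Frobenius (to eliminate $F^e_*$ from the question), a boundedness consequence of finite generation of $\sR(X,D)$ on $X \setminus W$, and the reflexivity/$S_2$-property of the sheaves involved (to handle the isolated points of $W$). Throughout, we will take $H = kH'$ for $k \gg 0$, so the ``multiple of $H'$'' conclusion is automatic.

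\textbf{Step 1 (Reduction via Frobenius).} Since $\ell \geq p^e$ and $H$ is Cartier, the projection formula yields
\[
F^e_* \O_X\bigl(\lfloor mD \rfloor + \ell H - C + N\bigr) \;\cong\; F^e_*\O_X\bigl(\lfloor mD \rfloor + (\ell-p^e) H - C + N\bigr) \otimes \O_X(H).
\]
Because $F^e$ is a homeomorphism on underlying topological spaces, global sections of $F^e_*$ are canonically identified with global sections of the sheaf itself, and global generation on $U$ can be tested at each point of $U$ via the vanishing $H^1\bigl(X, \O_X(\lfloor mD\rfloor+\ell H - C + N) \otimes \sI_x\bigr) = 0$.

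\textbf{Step 2 (Bounded family on $X\setminus W$).} Quasi-compactness of $X\setminus W$ together with finite generation of $\sR(X,D)$ yields a single integer $r\ge 1$ such that on $X\setminus W$ the multiplication maps $\O_X(\lfloor iD\rfloor)\otimes \O_X(\lfloor rD\rfloor)^{\otimes \lfloor m/r\rfloor}\twoheadrightarrow \O_X(\lfloor mD\rfloor)$ are surjective, where $i = m \bmod r \in \{0,\dots,r-1\}$. Hence the infinite family $\{\O_X(\lfloor mD\rfloor)\vert_{X\setminus W}\}_{m\geq 0}$ is generated by only finitely many ``remainder'' sheaves and tensor powers of the single sheaf $\sA := \O_X(\lfloor rD\rfloor)$. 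Choose $H = kH'$ with $k$ so large that $\sA$ admits a surjection from a power of $\O_X(rH)$ on $X\setminus W$ (using finite generation of the symbolic Rees algebra there). The constraint $\ell\ge m$ then allows one to absorb $\sA^{\otimes \lfloor m/r\rfloor}$ into a corresponding portion of $\ell H$, reducing the question on $X\setminus W$ to uniform Fujita vanishing applied to the \emph{finite} collection of sheaves $\{\O_X(\lfloor iD\rfloor - C) : 0\le i<r\}$, twisted by $\O_X(qH)$ for $q$ ranging over nonnegative integers and $N$ nef. Fujita vanishing then delivers $H^1(X, \sG\otimes \sI_x)=0$ uniformly for all $x\in X\setminus W$, all $(m,\ell,e,N)$ in our range, giving global generation on $X\setminus W$.

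\textbf{Step 3 (Isolated points $w\in W_0$).} Choose an open $V\ni w$ with $V\cap W = \{w\}$, so that Step 2 applies on $V\setminus\{w\}$. All sheaves $\O_X(\lfloor mD\rfloor + \ell H - C + N)$ are reflexive on the normal variety $X$, hence $S_2$: any section defined on $V\setminus\{w\}$ extends uniquely to $V$. A further enlargement of $H$ (still a multiple of $H'$) allows Fujita vanishing to kill $H^1(X,\sG\otimes \sI_w)$ uniformly over the same family, so the global sections obtained in Step 2 generate the stalk at $w$ as well. Combining the conclusions of Steps 2 and 3 yields global generation on $U = (X\setminus W)\cup W_0$.

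\textbf{Main obstacle.} The heart of the argument is Step 2: translating the local condition ``$\sR(X,D)$ is finitely generated on $X\setminus W$'' into \emph{uniform} Castelnuovo--Mumford/Fujita-type regularity for the infinite family indexed by $(m,\ell,e,N)$, when $D$ is merely a Weil $\bQ$-divisor (not $\bQ$-Cartier) and the ``floor'' operation interacts awkwardly with tensor products. The key trick is to use the hypothesis $\ell\ge m$ to bundle the unbounded tensor power $\sA^{\otimes \lfloor m/r\rfloor}$ with a matching portion of $\ell H$; without this hypothesis the family would be genuinely unbounded and no uniform $H$ could work.
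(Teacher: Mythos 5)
The proposal misses the central technical device of the paper's proof: passing to the normalized blowup $f\colon Y\to X$ of the fractional ideal sheaf $\O_X(N_0 D)$, on which $\O_X(N_0 D)\cdot\O_Y=\O_Y(B)$ becomes an honest Cartier, $f$-ample divisor. That linearization is what makes Fujita vanishing applicable, since $q_m B+\ell f^*A + f^*N$ is then a genuine Cartier divisor and only the finitely many ``remainder'' sheaves $\sH_m$ vary. Your Step~2 instead tries to run Fujita vanishing directly on $X$ using the family $\sA^{\otimes q}=\O_X(\rd rD.)^{\otimes q}$ for unbounded $q$, but these are not line bundles (nor do they agree with $\O_X(qrD)$ before reflexifying), so there is no Fujita-type bound uniform in $q$; the surjection $\sA^{\otimes q}\twoheadrightarrow\O_X(\rd mD.)$ is only known over $X\setminus W$ and does not convert $\sA^{\otimes q}$ into a line bundle power. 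The hypothesis $\ell\ge m$ cannot ``absorb'' this: in the paper it is $\ell\ge p^e$ that is used, precisely to offset the factor $p^e$ that the projection formula $F^e_*(-)\otimes\O_X(-jA)=F^e_*\bigl(-\otimes\O_X(-p^e jA)\bigr)$ produces in the $0$-regularity computation.

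Step~1 also contains a genuine error. Global generation of the $\O_X$-module $F^e_*\sF$ at $x$ is \emph{not} tested by $H^1\bigl(X,\sF\otimes\sI_x\bigr)=0$: because of the twisted module structure, $\frm_x\cdot F^e_*\sF=F^e_*\bigl(\frm_x^{[p^e]}\sF\bigr)$, which differs from $F^e_*(\frm_x\sF)$. The paper avoids this by proving $0$-regularity of $F^e_*\bigl(f_*\sG_m\otimes\O_X(\ell H)\bigr)$ itself rather than of its ``underlying'' sheaf. Finally, your Step~3 also doesn't close: the $S_2$-property lets sections extend past $w\in W_0$, but does not by itself show that global sections generate the stalk at $w$. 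The paper instead compares $F^e_*\bigl(f_*\sG_m\otimes\O_X(\ell H)\bigr)$ with its reflexive hull $F^e_*\O_X\bigl(\rd mD+\ell H.-C+N\bigr)$ and notes that the cokernel $\mathcal Q_{e,m,\ell}$ is supported on $W$, hence punctual (or zero) at each $w\in W_0$; combined with $H^1$-vanishing for the subsheaf and the elementary \cite[Lemma~7.3]{GrafJumpingCoefficientsOfNonQGor}, this yields global generation on $U$.
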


\begin{remark} \label{rem.Char0Remark}
\autoref{thm.AsymptoticGG} continues to hold over an arbitrary field $k$ of characteristic zero if one interprets $F = \id_X$ and $p^e = 1$.
The proof does not require any changes.
\end{remark}

\begin{proof}[Proof of~\autoref{thm.AsymptoticGG}]
The strategy is similar to \cite[Theorem 7.1]{GrafJumpingCoefficientsOfNonQGor}.
We will find a globally generated sheaf $F^e_* \sF_{m, \ell} \into F^e_* \O_X \big( \rd mD + \ell H. - C + N \big)$ so that the cokernel is supported on $W$.
The proof is divided into three steps.

\subsubsection*{Step 1: Blowing up}

Let $N_0$ be a positive integer such that $N_0 D$ is an integral Weil divisor. It follows that the Veronese subring $\sR(X \setminus W, N_0 D)$ is also Noetherian and $\sR(X \setminus W, D)$ is a finite $\sR(X \setminus W, N_0 D)$-module~\cite[Lemma 2.4]{GotoHerrmannNishidaVillamayor}. By~\cite[Theorem 3.2(3)]{GotoHerrmannNishidaVillamayor}, making $N_0$ more divisible if necessary we may assume that $\sR(X \setminus W, N_0 D)$ is generated in degree $1$ as a graded ring.

Let $f\colon Y \to X$ be the normalized blowup of the fractional ideal sheaf $\O_X(N_0 D)$.
Then we have $f^{-1}(X \setminus W) = \Proj \sR(X \setminus W, N_0 D)$.
In particular, $f$ is a small morphism over $X \setminus W$ by~\cite[Lemma 6.2]{KollarMori}.
Furthermore if we write
\[ \O_Y(B) = \factor{f^* \big( \O_X(N_0 D) \big)}{\mathrm{torsion}} = \O_X(N_0 D) \cdot \O_Y, \]
then $B$ is Cartier and $f$-ample by~\cite[Theorem~6.2]{GrafJumpingCoefficientsOfNonQGor}.
Thus by~\cite[II, Proposition 7.10]{Hartshorne} or \cite[Prop.~1.45]{KollarMori} there exists a very ample Cartier divisor $A$ on $X$ so that $B + f^* A$ is globally ample on $Y$.

\subsubsection*{Step 2: Vanishing}

Now for any integer $m \in \N_0$, write uniquely
\[ m = q_m N_0 + r_m \quad \text{with $0 \leq r_m \leq N_0 - 1$,} \]
i.e.~$q_m = \rd m / N_0.$.
Fix a nef Cartier divisor $N$ on $X$ and form the sheaf
\[ \sG_m := \O_Y(q_m B + f^* N) \otimes \underbrace{\big( \O_X \big( \lfloor r_m D - C \rfloor \big) \cdot \O_Y \big)}_{=: \sH_m}. \]

\begin{claim} \label{clm:FujitaSerre}
There exists an $m_0 \ge 1$ such that for all $\ell \ge m \ge m_0$, $i \geq 1$, and any nef Cartier divisor $P$ on $Y$ we have
\begin{equation} \label{eq:Fujita}
\phantom{\text{and} \quad}
H^i \big( Y, \sG_m \otimes \O_Y(\ell f^* A  + P) \big) = 0 \quad \text{and}
\end{equation}
\begin{equation} \label{eq:relSerre}
f_* \sG_m \qis \myR f_* \sG_m.
\end{equation}
\end{claim}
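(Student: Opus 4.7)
The plan is to rewrite $\sG_m$ so that each ingredient is either a fixed ample Cartier divisor, a nef divisor, or a coherent sheaf drawn from a finite list, and then invoke Fujita vanishing on $Y$ for \eqref{eq:Fujita} and relative Serre vanishing for $f$ for \eqref{eq:relSerre}. The crucial observation is that in the decomposition $m = q_m N_0 + r_m$, only $q_m$ grows with $m$, while the ``remainder sheaf'' $\sH^{(r)} := \O_X\big( \lfloor rD - C \rfloor \big) \cdot \O_Y$ takes only the $N_0$ values $r \in \{0, 1, \ldots, N_0 - 1\}$. Thus $\sG_m = \sH^{(r_m)} \otimes \O_Y(q_m B + f^* N)$, and I would establish both vanishing statements by applying their respective theorems uniformly to the finite family $\sH^{(0)}, \ldots, \sH^{(N_0-1)}$.

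For \eqref{eq:Fujita}, I would decompose
\[ q_m B + \ell f^* A = q_m(B + f^* A) + (\ell - q_m) f^* A, \]
which is legitimate because $\ell \geq m \geq q_m N_0 \geq q_m$. Substituting yields
\[ \sG_m \otimes \O_Y(\ell f^* A + P) = \sH^{(r_m)} \otimes \O_Y\big( q_m(B + f^* A) + (\ell - q_m) f^* A + f^* N + P \big). \]
Here $B + f^* A$ is ample on $Y$ by construction in Step 1, while the residual divisor $(\ell - q_m) f^* A + f^* N + P$ is a sum of nef divisors and hence itself nef. Applying Fujita's vanishing theorem to each $\sH^{(r)}$ with respect to the ample $B + f^* A$ produces an integer $q_0$ such that the higher cohomology vanishes whenever $q_m \geq q_0$; setting $m_0 := q_0 N_0$ then forces $q_m \geq q_0$ for all $m \geq m_0$.

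For \eqref{eq:relSerre}, the projection formula gives
\[ R^i f_* \sG_m = R^i f_*\big( \O_Y(q_m B) \otimes \sH^{(r_m)} \big) \otimes \O_X(N), \]
using that $f^* \O_X(N)$ is the pullback of a Cartier divisor on $X$. Since $B$ is $f$-ample and $\{\sH^{(r)}\}$ is a finite family of coherent sheaves on $Y$, relative Serre vanishing produces an integer $q_0'$ with $R^i f_*\big( \O_Y(q B) \otimes \sH^{(r)} \big) = 0$ for all $q \geq q_0'$, all $r$, and all $i \geq 1$. Enlarging $m_0$ if necessary so that $m_0 \geq \max(q_0, q_0') \cdot N_0$ delivers both vanishing statements simultaneously.

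The main technical inputs are Fujita vanishing and its relative Serre vanishing analogue; there is no substantive obstacle once the division $m = q_m N_0 + r_m$ has been used to absorb the ``non-uniform'' part of $\sG_m$ into a finite family, leaving the $q_m$-dependent part as simply a large multiple of a fixed ample (respectively, $f$-ample) divisor.
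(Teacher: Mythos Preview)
Your proposal is correct and follows essentially the same approach as the paper: isolate the finitely many remainder sheaves $\sH^{(r)}$, rewrite $q_m B + \ell f^*A = q_m(B+f^*A) + (\ell-q_m)f^*A$ with the residual term nef, invoke Fujita vanishing for~\eqref{eq:Fujita}, and relative Serre vanishing for the $f$-ample $B$ for~\eqref{eq:relSerre}. The only cosmetic difference is that you spell out the projection formula to strip off $f^*N$ in the relative case, which the paper leaves implicit.
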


\begin{proof}[Proof of~\autoref{clm:FujitaSerre}]
Let us make two easy observations: Firstly, the sheaf $\sH_m$ can take on only finitely many values. Secondly, $q_m \to \infty$ as $m \to \infty$.
Hence~\autoref{eq:Fujita} follows from Fujita vanishing~\cite[Theorem 1]{FujitaVanishingTheoremsForSemiPositive} applied to the ample divisor $B + f^* A$ on $Y$ upon writing
\begin{align*} \label{eq.FujitaVanishing}
& H^i \big( Y, \sG_m \otimes \O_Y(\ell f^* A  + P) \big) \\
= \; & H^i \big( Y, \sH_m \otimes \O_Y(q_m B + \ell f^* A + f^* N + P) \big) \\
= \; & H^i \big( Y, \sH_m \otimes \O_Y \big( q_m (B + f^* A) + \underbrace{(\ell - q_m)}_{\geq 0} f^* A + f^* N + P \big) \big).
\end{align*}%
Similarly, making $m_0$ even larger if necessary, \autoref{eq:relSerre} follows from relative Serre vanishing~\cite[Theorem~1.7.6]{LazarsfeldPositivity1} for the $f$-ample divisor $B$.
\end{proof}

\begin{claim} \label{clm:0reg}
Put $H = b \cdot A$ where $b > d = \dim X$. Then for every $e \ge 0$, $m \ge m_0$, and $\ell \ge \max \{ m, p^e \}$, the sheaf $F^e_* \big( f_* \sG_m \tensor \O_X(\ell H) \big)$ is $0$-regular with respect to $A$, and hence globally generated as an $\O_X$-module.
Furthermore its first cohomology group vanishes, $H^1 \big( X, F^e_* \big( f_* \sG_m \tensor \O_X(\ell H) \big) \big) = 0$.
\end{claim}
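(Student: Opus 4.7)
The plan is to establish Castelnuovo--Mumford $0$-regularity of $\sF := F^e_*\big(f_*\sG_m \otimes \O_X(\ell H)\big)$ with respect to the very ample divisor $A$, from which global generation follows by Mumford's theorem (\cite[Theorem~1.8.5]{LazarsfeldPositivity1}). Concretely, one needs to check the vanishings $H^i\big(X, \sF \otimes \O_X(-iA)\big) = 0$ for all $i \geq 1$, and the additional statement $H^1(X, \sF) = 0$ is the $i = 0$-version of the same computation.

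First I would apply the projection formula for the Frobenius: because $A$ is Cartier and $F^{e*}\O_X(A) = \O_X(p^e A)$, we have
\[
\sF \otimes \O_X(-iA) = F^e_*\big(f_*\sG_m \otimes \O_X(\ell H - ip^e A)\big),
\]
and the finite (hence affine) morphism $F^e$ commutes with cohomology. Next, using \autoref{eq:relSerre} so that $\myR f_* \sG_m \simeq f_* \sG_m$, the Leray spectral sequence reduces the computation to cohomology on $Y$:
\[
H^i\big(X, f_*\sG_m \otimes \O_X(\ell H - ip^e A)\big) = H^i\big(Y, \sG_m \otimes \O_Y((\ell b - ip^e) f^* A)\big),
\]
where I substitute $H = bA$.

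The decisive step is the arithmetic use of the hypothesis $b > d$: for every $1 \leq i \leq d = \dim X$, combining $\ell \geq p^e$ with $b \geq d+1 \geq i+1$ yields
\[
\ell b - ip^e \geq \ell b - i\ell = \ell(b-i) \geq \ell,
\]
so that $P := (\ell b - ip^e - \ell) f^* A$ is a nonnegative multiple of an ample divisor on $Y$ and in particular nef. Rewriting the relevant cohomology group as $H^i\big(Y, \sG_m \otimes \O_Y(\ell f^* A + P)\big)$ and invoking \autoref{eq:Fujita} (with the nef divisor $P$ and $m \geq m_0$) forces it to vanish for $1 \leq i \leq d$; for $i > d$, Grothendieck vanishing on $Y$ suffices. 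This establishes $0$-regularity, hence global generation. For the final assertion $H^1(X, \sF) = 0$, the same reduction gives $H^1\big(Y, \sG_m \otimes \O_Y(\ell b f^* A)\big)$, which vanishes by \autoref{eq:Fujita} applied with the nef divisor $P = \ell(b-1) f^* A$ (nef since $b > d \geq 1$ implies $b \geq 2$). I do not foresee a real obstacle here; the only delicacy is the bookkeeping ensuring that after peeling off $\ell f^* A$ from the twist, what remains is nef for every $i$ in the relevant range, which is exactly what the hypothesis $b > d$ secures.
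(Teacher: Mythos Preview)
Your proposal is correct and follows essentially the same route as the paper: projection formula for $F^e$, the quasi-isomorphism \autoref{eq:relSerre} to pass to $Y$, and then \autoref{eq:Fujita} together with the arithmetic consequence of $b>d$ and $\ell\ge\max\{m,p^e\}$. The only cosmetic difference is that the paper applies \autoref{eq:Fujita} with $P=0$ and the twist $(\ell b - p^e j)f^*A$ (checking $\ell b - p^e j \ge m$ via $\ell b \ge \ell + \ell d \ge m + p^e d$), whereas you peel off $\ell f^*A$ and absorb the rest into a nef $P$; both unpack to the same inequality.
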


\begin{proof}[Proof of~\autoref{clm:0reg}]
We need to show that for every $1 \le j \le d$, we have
\[ H^j \big( X, F^e_* \big( f_* \sG_m \tensor \O_X(\ell H) \big) \tensor \O_X(-jA) \big) = 0. \]
The left-hand side is
\begin{align*}
 & = H^j \Big( X, F^e_* \big( f_* \sG_m \tensor \O_X \big( (\ell b - p^e j) A \big) \big) \Big) && \text{projection formula} \\
 & = H^j \big( X, f_* \sG_m \tensor \O_X \big( (\ell b - p^e j) A \big) \big) && \text{$F^e$ is finite\footnotemark} \\
 & = \bH^j \big( X, \myR f_* \sG_m \tensor \O_X \big( (\ell b - p^e j) A \big) \big) && \text{\autoref{eq:relSerre}} \\
 & = H^j \big( Y, \sG_m \tensor \O_Y \big( (\ell b - p^e j) f^* A \big) \big) && \text{composition of derived functors} \\
 & = 0 && \text{\autoref{eq:Fujita} and $\ell b - p^e j \ge m$.}
\end{align*}%
\footnotetext{ One may also argue by noting that $F^e_*(\,\cdot\,)$ leaves the abelian sheaf structure, and hence sheaf cohomology, unchanged.}%
The assertion $\ell b - p^e j \ge m$ in the last line is justified since
\[ \ell b \ge \ell + \ell d \ge m + p^e d \ge m + p^e j. \]
Any coherent sheaf $0$-regular with respect to $A$ is globally generated by Castelnuovo--Mumford regularity~\cite[Theorem~1.8.5]{LazarsfeldPositivity1}.
The desired vanishing
\[ H^1 \big( X, F^e_* \big( f_* \sG_m \tensor \O_X(\ell H) \big) \big) = 0 \]
follows by the same argument as above, leaving off the $\O_X(-jA)$.
\end{proof}

\subsubsection*{Step 3:  Reflexification and global generation}

\begin{claim} \label{clm:reflhull}
For any $e, m, \ell \ge 0$, the reflexive hull of $F^e_* \big( f_* \sG_m \tensor \O_X(\ell H) \big)$ is equal to
\[ F^e_* \O_X \big( \rd mD + \ell H. - C + N \big). \]
Furthermore the cotorsion of $F^e_* \big( f_* \sG_m \tensor \O_X(\ell H) \big)$ is supported on $W$, i.e.~in the natural short exact sequence
\[ 0 \to F^e_* \big( f_* \sG_m \tensor \O_X(\ell H) \big) \to F^e_* \O_X \big( \rd mD + \ell H. - C + N \big) \to \mathcal{Q}_{e, m, \ell} \to 0 \]
the support of $\mathcal{Q}_{e, m, \ell}$ is contained in $W$.
\end{claim}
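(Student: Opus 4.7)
The plan is to reduce the claim to its analogue before applying $F^e_*$, identify the reflexive hull via a codim-1 stalk computation, and then verify the natural map is an isomorphism on $V := X\setminus W$ via a pinching argument. Since $F^e\colon X\to X$ is finite and acts as the identity on underlying topological spaces, $F^e_*$ preserves depth (hence reflexivity and reflexive hulls) and the support of any cokernel; it therefore suffices to establish the two analogous statements for the map $f_*\sG_m\otimes\O_X(\ell H) \to \O_X(\lfloor mD+\ell H\rfloor - C + N)$ in the category of $\O_X$-modules.

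For the reflexive-hull identification, I first observe that $W$ has codim $\ge 2$ in $X$: at any codim-1 point $\xi$ of $X$, the local ring is a DVR and the Rees algebra $\bigoplus_m t^{-\lfloor md_\xi\rfloor}\O_{X,\xi}\cdot T^m$ is finitely generated by an elementary toric argument (if $d_\xi = p/q$ in lowest terms, the elements $t^{-\lfloor jp/q\rfloor}T^j$ for $0\le j\le q$ suffice). The sheaf $f_*\sG_m\otimes \O_X(\ell H)$ is torsion-free of rank $1$ because $\sG_m = \O_Y(q_m B + f^*N)\otimes\sH_m$ is the tensor of a line bundle with the torsion-free subsheaf $\sH_m\subset\mathcal K_Y$, and both $f_*$ and tensoring with a line bundle preserve torsion-freeness. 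Its reflexive hull is therefore the divisorial sheaf determined by its codim-1 stalks on $X$. At any codim-1 $\xi$ (necessarily in $V$), the Rees algebra is locally generated in degree 1 and $N_0 D$ is Cartier at $\xi$, so $f$ is an isomorphism there; writing $d, n, c, h$ for the coefficients of $D, N, C, H$ at $\xi$, a direct stalk computation using the identity $q_m N_0 d + \lfloor r_m d\rfloor = \lfloor md\rfloor$ (valid since $q_m N_0 d\in\bZ$) gives
\[ (\sG_m\otimes f^*\O_X(\ell H))|_\xi \;=\; t^{-\lfloor md\rfloor - \ell h - n + c}\,\O_{X,\xi}, \]
which matches $\O_X(\lfloor mD+\ell H\rfloor - C + N)|_\xi$, establishing the reflexive-hull identification.

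To show the cokernel is supported on $W$, I must check that the natural inclusion is an isomorphism on $V$. The upper bound $f_*\sG_m|_V\subseteq\O_V(\lfloor mD\rfloor - C + N)$ follows from $\sG_m\hookrightarrow\sG_m^{**}$ on $Y|_V$ together with the identification $f_*\sG_m^{**}|_V = \O_V(\lfloor mD\rfloor - C + N)$, which uses the smallness of $f|_{f^{-1}(V)}$ so that the pushforward of a divisorial sheaf on $Y$ is the divisorial sheaf of the pushed-forward divisor; here $f_*(q_m B + f^*N + \tilde E) = q_m N_0 D + N + E = \lfloor mD\rfloor - C + N$, where $\tilde E$ is the Weil divisor on $Y|_V$ corresponding to $E = \lfloor r_m D - C\rfloor$. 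The reverse inclusion is obtained by viewing $\O_V(\lfloor mD\rfloor - C + N)$ as a subsheaf of $\mathcal{K}_V = \mathcal{K}_Y$ and lifting sections to $\sG_m$ stalk-by-stalk on $Y|_V$. The hard part will be verifying that codim-1 pole conditions on a rational function are enough for it to define a section of $\sG_m$ at every point of $Y|_V$; this reduces to an $S_2$-type extension property for $\sG_m$ on $Y|_V$, which relies on the smallness of $f$ and the generation of $\sR_V$ in degree 1. Once this is established, tensoring with the line bundle $\O_V(\ell H)$ yields the isomorphism on $V$, the cokernel is supported on $W$, and applying the exact functor $F^e_*$ gives the claimed short exact sequence.
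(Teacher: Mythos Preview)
Your overall architecture is sound---reduce to the statement before $F^e_*$, identify the reflexive hull, then check the cokernel is supported on $W$---but the execution is both more complicated than necessary and contains an unfinished step.

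For the reflexive hull, you work stalk-by-stalk at codimension-one points, proving by hand that $W$ has codimension $\ge 2$ and then computing valuations. The paper avoids all of this by working on the isomorphism locus $V\subset X$ of $f$. Since $f$ is a proper birational map between normal varieties, $X\setminus V$ automatically has codimension $\ge 2$; there is no need to analyze $\sR(X,D)$ at DVRs. On $V$ the map $f$ is literally an isomorphism, so
\[
(f_*\sG_m)\big|_V=\O_V(q_mN_0D+N)\otimes\O_V(\lfloor r_mD-C\rfloor)=\O_V(\lfloor mD\rfloor-C+N),
\]
the last equality because $N_0D$ is Cartier on $V$. Pushing forward along $V\hookrightarrow X$ gives $(f_*\sG_m)^{**}=\O_X(\lfloor mD\rfloor-C+N)$ in one line.

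For the support assertion you have a genuine gap. You write that ``the hard part will be verifying that codim-1 pole conditions \dots\ are enough \dots; this reduces to an $S_2$-type extension property for $\sG_m$ on $Y|_V$ \dots\ Once this is established \dots''. But you never establish it: you state what is needed and then proceed as if it were done. The paper's route is again more direct: over $X\setminus W$ the map $f$ is \emph{small} (this was set up in Step~1), and the pushforward of a reflexive sheaf along a small proper birational map between normal varieties is reflexive. This immediately gives that $f_*\sG_m$ is reflexive on $X\setminus W$, hence the cotorsion is supported on $W$. You correctly sense that smallness is the relevant input, but you route it through an $S_2$ property of $\sG_m$ on $Y$ that you do not verify; the paper instead uses smallness on the level of the pushforward, which is the standard and cleaner statement.
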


\begin{proof}[Proof of~\autoref{clm:reflhull}]
Let $V \subset X$ be the maximal open subset over which $f$ is an isomorphism.
Note that $\codim_X(X \setminus V) \ge 2$. We see that
\[ (f_* \sG_m)\big|_V = \O_V(q_m N_0 D + N) \tensor \O_V \big( \rd r_m D - C. \big) = \O_V \big( \rd mD. - C + N \big), \]
the first equality holding by definition and the second one because $N_0 D$ is Cartier on $V$. Pushing this forward by the inclusion $V \into X$, we get
\begin{equation} \label{eq:reflhull}
(f_* \sG_m)^{**} = \O_X \big( \rd mD. - C + N \big).
\end{equation}
Observe that Frobenius pushforward commutes with taking the reflexive hull.
Hence twisting~\autoref{eq:reflhull} by $\O_X(\ell H)$ and applying $F^e_*(\,\cdot\,)$ proves the first part of the claim.
For the second part, use the fact that $f$ is small over $X \setminus W$ and that the pushforward of a reflexive sheaf under a small birational map is again reflexive.
\end{proof}

Returning to the proof of \autoref{thm.AsymptoticGG}, choose a point $x \in U = (X \setminus W) \cup W_0$, and assume first that $m \ge m_0$.
We see that $\mathcal Q = \mathcal{Q}_{e, m, \ell}$ is globally generated at $x$ since either
\begin{itemize}[leftmargin=*]
\item $x \in W_0$ and then by~\autoref{clm:reflhull}, $x$ is an isolated point of $\supp \mathcal{Q}$ or $\mathcal{Q}$ is even zero at $x$, or
\item $x \in X \setminus W$ and then $\mathcal Q$ definitely is zero at $x$.
\end{itemize}
Hence $F^e_* \O_X \big( \rd mD + \ell H. - C + N \big)$ is globally generated at $x$ by~\autoref{clm:0reg} and \cite[Lemma 7.3]{GrafJumpingCoefficientsOfNonQGor}.

To finish the proof, we still need to take care of the sheaves
\begin{equation} \label{eq:finitely many m}
F^e_* \O_X \big( \rd mD + \ell H. - C + N \big) \quad \text{for $e \ge 0$, $0 \le m < m_0$, and $\ell \ge \max \{ m, p^e \}$.}
\end{equation}
To this end, notice that arguing as in the proof of~\autoref{clm:0reg}, for $1 \le j \le d$ we have
\begin{align*}
& H^j \big( X, F^e_* \O_X \big( \rd mD + \ell H. - C + N \big) \tensor \O_X(-jA) \big) \\
= \; & H^j \big( X, \underbrace{\O_X \big( \rd mD. - C \big)}_{\text{finitely many values}} \tensor \O_X \big( \underbrace{(\ell b - p^e j)}_{\ge p^e(b-d)} A + N \big) \big).
\end{align*}%
Hence by Fujita vanishing, taking $b$ sufficiently large in~\autoref{clm:0reg}, we may assume that the sheaves~\autoref{eq:finitely many m} are $0$-regular with respect to $A$. In particular they are globally generated on $U$.

To justify the last claim of~\autoref{thm.AsymptoticGG}, simply note that for any ample Cartier divisor $H'$ on $X$ given in advance, we may pick $A$ to be a sufficiently high multiple of $H'$ and then also $H$ will be a multiple of $H'$.
\end{proof}

\section{Proof of main results}

In this section we prove the results announced in the introduction.
But first we state a weak result on global generation of test ideals.
Compare with \cite{MustataNonNefLocusPositiveChar,SchwedeACanonicalLinearSystem,KeelerFujita}.

\begin{proposition} \label{prop.UniformGlobalGenTau}
Suppose $X$ is a normal projective variety over an $F$-finite field $k$ of characteristic $p > 0$, $\Delta \geq 0$ is a $\bQ$-divisor, $\fra$ is a nonzero coherent ideal sheaf and $t_0 > 0$ is a real number.
Suppose that $\sR\big(X, -(K_X + \Delta)\big)$ is finitely generated away from a closed set $W \subseteq X$ and that $W_0 \subseteq W$ is the set of isolated points of $W$.
Set $U = (X \setminus W) \cup W_0$.  Then there exists an ample divisor $H$ such that
\[ \tau(X, \Delta, \fra^t) \otimes \O_X(H) \]
is globally generated on $U$ for all $t \in [0, t_0]$.
\end{proposition}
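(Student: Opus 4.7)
The plan is to combine the description
\[\tau(X, \Delta, \fra^t) = \sum_{e \geq e_0} \Image\Big[F^e_*\Big(\fra^{\lceil tp^e\rceil} \cdot \O_X\big(\rd (1-p^e)(K_X + \Delta). - C\big)\Big) \xrightarrow{\,\tr\,} \O_X \Big]\]
from \autoref{lem.test ideal} (for a Cartier divisor $C$ chosen as in that lemma) with the asymptotic global generation result \autoref{thm.AsymptoticGG}, absorbing both the varying ideal power $\fra^{\lceil tp^e\rceil}$ and the dependence on $t$ into a single ample twist $H$ that is uniform in $t \in [0, t_0]$.

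First I would fix an ample Cartier divisor $H_0$ such that $\fra \otimes \O_X(H_0)$ is globally generated, yielding surjections $\O_X^{N_k}(-kH_0) \twoheadrightarrow \fra^k$ for every $k \geq 0$. Multiplying into $\O_X\big(\rd (1-p^e)(K_X + \Delta). - C\big)$, applying the (exact) functor $F^e_*$, tensoring with the invertible sheaf $\O_X(H)$, and using the projection formula, one obtains a surjection
\[F^e_*\O_X^{N_k}\big(\rd (1-p^e)(K_X + \Delta). - C - kH_0 + p^e H\big) \twoheadrightarrow F^e_*\Big(\fra^k \cdot \O_X\big(\rd (1-p^e)(K_X + \Delta). - C\big)\Big) \otimes \O_X(H)\]
with $k = \lceil tp^e\rceil$. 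Hence it suffices to choose a single ample $H$ for which the line-bundle pushforward on the left is globally generated on $U$ simultaneously for all $e \geq 0$ and $t \in [0, t_0]$.

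I would then apply \autoref{thm.AsymptoticGG} with $D := -(K_X + \Delta)$, $m := p^e - 1$ and $\ell := p^e$, obtaining an ample Cartier divisor $H_1$ which, by the last assertion of that theorem, may be taken to be an arbitrarily large multiple of $H_0$. Writing $H := H_1 + bH_0$ for a positive integer $b$, the ``remainder'' divisor
\[N_{e, t} := p^e H - p^e H_1 - \lceil tp^e\rceil H_0 = \big(p^e b - \lceil tp^e\rceil\big)\, H_0\]
is a non-negative multiple of the ample divisor $H_0$, hence nef, as soon as $p^e b \geq \lceil tp^e\rceil$; since $\lceil tp^e\rceil \leq t_0 p^e + 1$, it suffices to take any integer $b \geq t_0 + 1$. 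Then \autoref{thm.AsymptoticGG}, applied with this nef $N_{e, t}$, supplies the desired global generation of the left-hand side on $U$.

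Global generation is inherited by the quotient on the right and, after composing with the trace map, by its image inside $\O_X(H)$. Each such image sits inside $\tau(X, \Delta, \fra^t) \otimes \O_X(H)$, so its global sections extend to global sections of the latter; at every point $x \in U$ the stalk of $\tau(X, \Delta, \fra^t) \otimes \O_X(H)$ is a sum of such image stalks, hence is generated by these global sections. The principal obstacle is the uniformity in $t \in [0, t_0]$, which is handled by the elementary bound $\lceil tp^e\rceil \leq t_0 p^e + 1$ together with the freedom to enlarge $H_1$ (equivalently, $b$) afforded by \autoref{thm.AsymptoticGG}.
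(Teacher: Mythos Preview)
Your proof is correct and follows essentially the same approach as the paper: both arguments combine \autoref{lem.test ideal} with \autoref{thm.AsymptoticGG} after trading the ideal powers $\fra^{\lceil tp^e\rceil}$ for an ample twist via a global-generation surjection. The only bookkeeping difference is that the paper absorbs the auxiliary ample $A$ into $D$ by setting $D = -(K_X + \Delta + A)$ and then applies \autoref{thm.AsymptoticGG} with $N = 0$, whereas you keep $D = -(K_X + \Delta)$ and instead exploit the nef parameter $N = (p^e b - \lceil tp^e\rceil)H_0$; both choices lead to the same conclusion.
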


\begin{proof}
Choose an effective Cartier divisor $C \geq 0$ on $X$ so that $\O_X(-C) \subseteq \tau(X, \Delta, \fra^{t_0}) \subseteq \tau(X, \Delta, \fra^t)$.
By \autoref{lem.test ideal}, for any $t \in [0, t_0]$ we have
\begin{equation} \label{eq:test ideal}
\tau(X, \Delta, \fra^t) = \sum_{e \ge 0} \Image \Big[ F^e_* \Big( \fra^{\lceil t p^e \rceil} \O_X \big( \rd (1 - p^e) (K_X + \Delta). - C \big) \Big) \xrightarrow{\;\;\tr\;\;} \O_X \Big].
\end{equation}
Now fix an ample Cartier divisor $A$ on $X$ so that
\[
\fra^{\lceil t \rceil} \tensor \O_X(A)
\]
is globally generated for all $t \in [0, t_0]$.  We then observe that for all $m > 0$ and $t \in [0, t_0]$,
\begin{equation} \label{eq:twisted ideal}
\fra^{\rp mt.} \tensor \O_X(mA)
\end{equation}%
is also globally generated. The reason is that since $\rp mt. \le m \rp t.$, the ideal $\fra^{\lceil mt \rceil}$ can be written as the product of $m$ ideals of the form $\fra^{\rp s.}$ for various values of $s \in [0, t_0]$.
For ease of notation, write $W_m^t$ for the $k$-vector space of global sections of the sheaf~\autoref{eq:twisted ideal}.
It follows that for every $m > 0$, the map
\[
W_m^t \tensor_k \O_X(-mA) \to \fra^{\rp mt.}
\]
is surjective. Combining with~\autoref{eq:test ideal}, we get that
\begin{align*}
\tau(X, \Delta, \fra^t) & =
  \sum_{e \geq 0} \Img \Big[ F^e_* \Big( \fra^{\lceil tp^e \rceil} \O_X \big( \lfloor (1 - p^e)(K_X + \Delta) \rfloor - C \big) \Big) \to \O_X \Big] \\
& = \sum_{e \geq 0} \Img \Big[ F^e_* \Big( W_{p^e}^t \tensor_k \O_X(-p^e A) \tensor \O_X \big( \lfloor (1 - p^e)(K_X + \Delta) \rfloor - C \big) \Big) \to \O_X \Big] \\
& = \sum_{e \geq 0} \Img \Big[ F^e_* \Big( W_{p^e}^t \tensor_k \O_X \big( \lfloor (p^e - 1)(-K_X - \Delta - A) \rfloor - C - A \big) \Big) \to \O_X \Big].
\end{align*}
Now choose an ample divisor $H$ that satisfies the conclusion of \autoref{thm.AsymptoticGG} relative to the divisor $D = -(K_X + \Delta + A)$, where $C + A$ takes the role of $C$. Then for $m = p^e - 1$, $\ell = p^e$ and $N = 0$ we get that
\begin{equation} \label{eq:gg}
F^e_* \Big( \O_X \big( \lfloor (p^e - 1)(-K_X - \Delta - A) \rfloor - C - A \big) \Big) \tensor \O_X(H)
\end{equation}
is globally generated (as an $\O_X$-module) over $U$ for all $e \geq 0$.
Hence also
\[ \tau(X, \Delta, \fra^t) \tensor \O_X(H) \]
is globally generated over $U$, being a quotient of a direct sum of sheaves of the form~\autoref{eq:gg}. This completes the proof.
\end{proof}

\begin{theorem} \label{thm.discrete isol}
Suppose that $X$ is a normal variety over an $F$-finite field $k$ of positive characteristic and that $\Delta \geq 0$ is a $\bQ$-divisor such that $\sR \big( X, -(K_X + \Delta) \big)$ is finitely generated except at an isolated collection of points. Suppose $\fra \subseteq \O_X$ is a nonzero coherent ideal sheaf. Then the $F$-jumping numbers of $(X, \Delta, \fra)$ have no limit points.
\end{theorem}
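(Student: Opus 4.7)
The plan is to reduce to the projective case and then exploit \autoref{prop.UniformGlobalGenTau}. Since discreteness is local on $X$, I may assume $X$ is affine, hence quasi-projective. Taking the normalization of a projective closure yields a normal projective variety $\bar X$ containing $X$ as an open subset; write $D := \bar X \setminus X$ for its closed complement. I extend $\Delta$ to $\bar \Delta$ on $\bar X$ by taking Zariski closures of the components of $\supp \Delta$, and extend $\fra$ to a nonzero coherent ideal sheaf $\bar \fra \subseteq \O_{\bar X}$.

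The central observation is that isolatedness of the bad points passes to $\bar X$. Let $W'$ denote the non-finitely-generated locus of $\sR \big( \bar X, -(K_{\bar X} + \bar \Delta) \big)$. By hypothesis $W' \cap X$ is a finite set $\{p_1, \dots, p_n\}$ of isolated points, and therefore $W' \setminus \{p_1, \dots, p_n\} \subseteq D$. Because $D$ is closed and each $p_i$ lies in $X = \bar X \setminus D$, a small open neighborhood of $p_i$ in $\bar X$ meets $W'$ only in $p_i$. Hence each $p_i$ lies in $W'_0$, and $X \subseteq (\bar X \setminus W') \cup W'_0$.

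Fix $t_0 > 0$. By \autoref{prop.UniformGlobalGenTau} there is an ample divisor $H$ on $\bar X$ such that $\tau(\bar X, \bar \Delta, \bar \fra^t) \otimes \O_{\bar X}(H)$ is globally generated on $X$ for every $t \in [0, t_0]$. Writing
\[
V_t := H^0 \big( \bar X,\, \tau(\bar X, \bar \Delta, \bar \fra^t) \otimes \O_{\bar X}(H) \big) \subseteq H^0 \big( \bar X,\, \O_{\bar X}(H) \big),
\]
global generation on $X$ means that the restriction $\tau(\bar X, \bar \Delta, \bar \fra^t) \otimes \O_{\bar X}(H) \big|_X$ equals the image of the evaluation map $V_t \otimes_k \O_X \to \O_X(H|_X)$. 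Since the test ideal is a sheaf (and the sheaves $\sC^e_{\bar \Delta}$ restrict to $\sC^e_\Delta$ on $X$), this restriction coincides with $\tau(X, \Delta, \fra^t) \otimes \O_X(H|_X)$, which is therefore determined by $V_t$.

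As $t$ increases on $[0, t_0]$, the family $\{V_t\}$ is a descending chain of subspaces of the finite-dimensional $k$-vector space $H^0(\bar X, \O_{\bar X}(H))$, hence takes only finitely many values. Consequently $\tau(X, \Delta, \fra^t)$ takes only finitely many values in $[0, t_0]$, so the $F$-jumping numbers in this interval are finite in number. Since $t_0$ was arbitrary, the $F$-jumping numbers of $(X, \Delta, \fra)$ have no limit points. The most delicate step to justify is that isolatedness of the bad points on $X$ survives compactification, but as sketched above this follows automatically from $D$ being closed.
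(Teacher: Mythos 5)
Your proof is correct and follows the same strategy as the paper's: reduce to affine $X$, compactify to a normal projective $\bar X$, extend $\Delta$ and $\fra$, apply \autoref{prop.UniformGlobalGenTau}, and conclude discreteness on $[0,t_0]$ from the descending chain of subspaces $V_t$ of the finite-dimensional space $H^0\big(\bar X, \O_{\bar X}(H)\big)$, which is precisely the content of \cite[Lemma~8.2]{GrafJumpingCoefficientsOfNonQGor} that the paper invokes as a black box. You also make explicit a point left implicit in the paper, namely that since $\bar X \setminus X$ is closed and $X$ is open, the isolatedness of the non-finitely-generated locus is preserved under compactification, ensuring $X$ lies inside the open set $U$ on which \autoref{prop.UniformGlobalGenTau} gives global generation.
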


\begin{proof}
By~\cite[Proposition~3.28]{BlickleSchwedeTakagiZhang}, we may assume that $X$ is affine.
Let $\overline{X}$ denote the closure of $X$ in some projective space.
By normalizing, we may also assume that $\overline{X}$ is normal.
There exists a $\bQ$-divisor $\wb\Delta \ge 0$ on $\wb X$ and a coherent ideal sheaf $\wb\fra \subset \O_{\wb X}$ which restrict to $\Delta$ and $\fra$, respectively.

Pick an arbitrary real number $t_0 > 0$.  By \autoref{prop.UniformGlobalGenTau}, we know that there exists an ample Cartier divisor $H$ on $\wb X$ such that
\[ \tau \big( \wb X, \wb\Delta, \wb\fra^t \big) \otimes \O_{\wb X}(H) \quad \text{is globally generated on $X \subset \wb X$ for all $t \in [0, t_0]$.} \]
Note that $\tau \big( \wb X, \wb\Delta, \wb\fra^t \big) \big|_X = \tau(X, \Delta, \fra^t)$ since $X \subset \wb X$ is open.
Now it follows from~\cite[Lemma~8.2]{GrafJumpingCoefficientsOfNonQGor} that for any strictly increasing sequence of numbers $0 \le s_0 < s_1 < \cdots < t_0$, the corresponding sequence of test ideals
\[ \tau(X, \Delta, \fra^{s_0}) \supset \tau(X, \Delta, \fra^{s_1}) \supset \cdots \]
stabilizes. Hence the set of $F$-jumping numbers of $(X, \Delta, \fra)$ does not have a limit point in the interval $[0, t_0]$.
As $t_0 > 0$ was chosen arbitrarily, this proves the theorem.
\end{proof}

\begin{theorem} \label{thm.discrete klt}
Suppose that $X$ is a normal variety over a field $k$ of characteristic zero and that $\Delta \geq 0$ is a $\bQ$-divisor such that $\sR \big( X, -(K_X + \Delta) \big)$ is finitely generated except at an isolated collection of points. Suppose $\fra \subseteq \O_X$ is a nonzero coherent ideal sheaf. Then the jumping numbers of $(X, \Delta, \fra)$ have no limit points.
\end{theorem}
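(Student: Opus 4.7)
The plan is to follow the strategy of \autoref{thm.discrete isol}, replacing test ideals by multiplier ideals throughout. The main new ingredient needed is a characteristic zero analog of \autoref{prop.UniformGlobalGenTau}: for every $t_0 > 0$ there should exist an ample Cartier divisor $H$ on a suitable compactification $\wb X$ of $X$ such that $\mJ(\wb X, \wb\Delta, \wb\fra^t) \otimes \O_{\wb X}(H)$ is globally generated on the relevant open set $U \subset \wb X$ for all $t \in [0, t_0]$ simultaneously.

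First I would reduce to the case that $X$ is affine, since multiplier ideals commute with restriction to open subsets and hence discreteness of jumping numbers is local in nature. Then I would compactify: let $\wb X$ denote a normalization of the projective closure of $X$, and extend $\Delta$ and $\fra$ to an effective $\bQ$-divisor $\wb\Delta$ and a coherent ideal sheaf $\wb\fra$ on $\wb X$. The closed locus $W \subset \wb X$ where $\sR\bigl(\wb X, -(K_{\wb X} + \wb\Delta)\bigr)$ fails to be finitely generated is contained in $(\wb X \setminus X) \cup \{\text{isolated bad points of } X\}$; its isolated points $W_0$ include the isolated non-finitely-generated points of $X$, so $U := (\wb X \setminus W) \cup W_0$ contains $X$.

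The heart of the argument is the global generation statement. I would use \autoref{thm.compatible} to express
\[ \mJ(\wb X, \wb\Delta, \wb\fra^t) = \mJ\bigl((\wb X, \wb\Delta + \Delta'); \wb\fra^t\bigr) \]
for a $(km_0)$-compatible boundary $\Delta' = \frac{1}{km_0} M$, where $M$ is a general member of a linear system of the form $|\sL(-kD)|$. Since $K_{\wb X} + \wb\Delta + \Delta'$ is now $\bQ$-Cartier, on a log resolution $f \colon Y \to \wb X$ the multiplier ideal is realized as $f_* \O_Y\bigl(\lceil K_Y - f^*(K_{\wb X} + \wb\Delta + \Delta') - tZ\rceil\bigr)$. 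After projecting this onto $\wb X$ and twisting by a suitable ample divisor, one can then apply the characteristic zero version (\autoref{rem.Char0Remark}) of \autoref{thm.AsymptoticGG} to the Weil divisor $D = -(K_{\wb X} + \wb\Delta)$ to write the twisted multiplier ideal as a quotient of a globally generated sheaf of the form $\O_{\wb X}(\rd mD + \ell H. - C + N)$, obtaining global generation on $U$.

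The subtle point, where the main obstacle lies, is uniformity in $t$: the compatible boundary $\Delta'$ and the multiplier index $m$ depend a priori on $t$. The plan to handle this is to exploit the freedom in \autoref{thm.compatible} to replace $k$ by higher multiples, absorbing the $t$-dependence into a single sufficiently high multiple of a fixed ample class, mirroring the way the factor $p^e$ is absorbed in the proof of \autoref{prop.UniformGlobalGenTau}. Once this uniform global generation is in hand, the argument concludes exactly as in \autoref{thm.discrete isol}: the restrictions $\mJ(\wb X, \wb\Delta, \wb\fra^t)|_X = \mJ(X, \Delta, \fra^t)$ form, after twisting by $H$, a descending chain of subspaces of the finite-dimensional $k$-vector space $H^0(\wb X, \O_{\wb X}(H))$, so by \cite[Lemma~8.2]{GrafJumpingCoefficientsOfNonQGor} no strictly decreasing sequence of such ideals can occur in $[0, t_0]$. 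As $t_0$ was arbitrary, the jumping numbers of $(X, \Delta, \fra)$ have no limit points.
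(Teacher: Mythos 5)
Your overall plan --- compactify, replace the non-$\bQ$-Gorenstein multiplier ideal by that of a $\bQ$-Gorenstein pair via a compatible boundary, achieve global generation after a single ample twist uniformly in $t$, and conclude by \cite[Lemma~8.2]{GrafJumpingCoefficientsOfNonQGor} --- is the strategy the paper follows, and you correctly single out uniformity in $t$ as the subtle point. However, the mechanism you propose for global generation of the multiplier ideal does not work, and the uniformity is left imprecise in a way that matters.

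The gap is in the sentence asserting that \autoref{thm.AsymptoticGG} (via \autoref{rem.Char0Remark}) lets you ``write the twisted multiplier ideal as a quotient of a globally generated sheaf of the form $\O_{\wb X}(\rd mD + \ell H. - C + N)$.'' That theorem gives global generation of sheaves $\O_{\wb X}(\rd mD + \ell H. - C + N)$ themselves, but $\mJ(\wb X, \wb\Delta + \Delta'; \wb\fra^t) = f_* \O_Y\bigl(\lceil K_Y - f^*(K_{\wb X} + \wb\Delta + \Delta') - tZ\rceil\bigr)$ is a pushforward from a log resolution, not a quotient of such a sheaf, so global generation does not transfer. The quotient-of-globally-generated mechanism is precisely what makes the characteristic-$p$ argument of \autoref{prop.UniformGlobalGenTau} go through, since $\tau$ is literally a sum of images $\tr\bigl(F^e_*(\cdots)\bigr)$; there is no analogous presentation of $\mJ$. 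In characteristic zero one needs a Nadel-vanishing/Castelnuovo--Mumford-regularity argument for multiplier ideals of $\bQ$-Gorenstein pairs, which is exactly \cite[Proposition~8.3]{GrafJumpingCoefficientsOfNonQGor} and is what the paper invokes at this step. Relatedly, the paper's actual use of \autoref{thm.AsymptoticGG} is upstream: together with \autoref{thm.compatible} it produces, for every $k \ge 2$, a $(km_0)$-compatible boundary $\wb\Delta_k$ whose $\bQ$-linear equivalence class is \emph{independent of $k$}. This independence is the precise content behind your phrase ``absorbing the $t$-dependence into a single sufficiently high multiple of a fixed ample class''; it is what lets \cite[Proposition~8.3]{GrafJumpingCoefficientsOfNonQGor} return one ample $H$ that works for all $k$, hence for all $t$ in the bounded range. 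Without stating that invariance, and without the correct global-generation tool for multiplier ideals, the proposal as written does not close.
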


\begin{proof}
The proof follows quite closely along the lines of~\cite[Theorem~8.1]{GrafJumpingCoefficientsOfNonQGor}.
For the reader's convenience, we give a sketch of the argument here.

Arguing by contradiction, assume that there is a strictly increasing and bounded above sequence $0 \le s_0 < s_1 < \cdots$ of jumping numbers of $(X, \Delta, \fra)$.
As above, we may assume that there is a triple $(\wb X, \wb\Delta, \wb\fra)$ containing $(X, \Delta, \fra)$ as an open subset and such that $\wb X$ is normal and projective.
Let $m_0$ be the Weil index of $(X, \Delta, \fra)$.
By \autoref{thm.AsymptoticGG} in combination with \autoref{rem.Char0Remark}, using \autoref{thm.compatible} we can construct for each $k \ge 2$ a $\bQ$-Weil divisor $\wb\Delta_k$ on $\wb X$ such that $\Delta_k := \wb\Delta_k\big|_X$ is a $(km_0)$-compatible boundary for $(X, \Delta, \fra)$ and furthermore the $\bQ$-linear equivalence class of $\wb\Delta_k$ does not depend on $k$.
The last property is crucial, as it enables us to find an ample Cartier divisor $H$ on $\wb X$ such that
\[ \mJ(\wb X, \wb\Delta + \wb\Delta_k, \wb\fra^{s_\ell}) \tensor \O_{\wb X}(H) \quad \text{is globally generated for all $k \ge 2$, $\ell \ge 0$,} \]
using~\cite[Proposition~8.3]{GrafJumpingCoefficientsOfNonQGor}.
Since $\Delta_k$ is $(km_0)$-compatible, it follows that $\mJ(\wb X, \wb\Delta, \wb\fra^{s_\ell}) \tensor \O_{\wb X}(H)$ is globally generated on $X$ for all $\ell \ge 0$.
By~\cite[Lemma~8.2]{GrafJumpingCoefficientsOfNonQGor}, this implies that the sequence of ideals
\[ \mJ(\wb X, \wb\Delta, \wb\fra^{s_0}) \supset \mJ(\wb X, \wb\Delta, \wb\fra^{s_1}) \supset \cdots \]
stabilizes when restricted to $X$.
Since $\mJ(\wb X, \wb\Delta, \wb\fra^{s_\ell})\big|_X = \mJ(X, \Delta, \fra^{s_\ell})$, this contradicts the assumption that each $s_i$ is a jumping number of $(X, \Delta, \fra)$.
\end{proof}

\begin{theorem} \label{thm.HSLG}
Suppose that $X$ is a normal variety over an $F$-finite field $k$ of characteristic $p > 0$.  Set
\[
J_e := J_e^X := \Image \Big(F^e_* \O_X \big( (1-p^e)K_X \big) \cong \sHom_{\O_X}(F^e_* \O_X, \O_X) \xrightarrow{\textnormal{eval@1}} \O_X \Big) \subset \O_X.
\]
If $\sR(X, -K_X)$ is finitely generated except at an isolated collection of points, then $J_e = J_{e+1}$ for all $e \gg 0$.
\end{theorem}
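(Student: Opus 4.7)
My plan is to mirror the strategy of \autoref{thm.discrete isol}: compactify, apply \autoref{thm.AsymptoticGG} to obtain uniform global generation after a single ample twist, and then extract stabilization from the fact that the relevant twisted ideals sit inside a single finite-dimensional vector space of global sections.

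\textbf{Step 1: reduction to the projective case.}
Since $J_e = J_{e+1}$ can be checked locally on $X$, I would first pass to an affine chart and then compactify to a normal projective variety $\wb X \supseteq X$, extending a canonical divisor from $X$ to $\wb X$. Because $F^e_*$ and evaluation at~$1$ both commute with restriction to open subsets, $J^{\wb X}_e \big|_X = J^X_e$. The key observation to verify is that every isolated point $p$ of the non-finitely-generated locus $W$ of $\sR(X, -K_X)$ remains isolated in the corresponding locus $\wb W \subseteq \wb X$: any neighborhood of $p$ contained in $X$ is already open in $\wb X$ and meets $\wb W$ only at $p$. Hence $X \subseteq \wb U := (\wb X \setminus \wb W) \cup \wb W_0$, where $\wb W_0$ denotes the set of isolated points of $\wb W$.

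\textbf{Step 2: asymptotic global generation after one ample twist.}
I would apply \autoref{thm.AsymptoticGG} with $D = -K_{\wb X}$, $C = N = 0$, $m = p^e - 1$, and $\ell = p^e$. This produces an ample Cartier divisor $H$ on $\wb X$, chosen once and for all, such that
\[
F^e_* \O_{\wb X}\big((1-p^e) K_{\wb X}\big) \tensor \O_{\wb X}(H) \;\cong\; F^e_* \O_{\wb X}\big((1-p^e)K_{\wb X} + p^e H\big)
\]
is globally generated on $\wb U$ for every $e \ge 0$, where the identification uses the projection formula. Twisting the evaluation-at-$1$ trace map by $\O_{\wb X}(H)$ then realizes $J^{\wb X}_e \tensor \O_{\wb X}(H)$ as a quotient of this sheaf, so it too is globally generated on $\wb U$ for every $e$.

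\textbf{Step 3: stabilization via finite-dimensional global sections.}
The factorization $F^{e+1}_* \O_{\wb X}\big((1-p^{e+1})K_{\wb X}\big) \to F^e_* \O_{\wb X}\big((1-p^e)K_{\wb X}\big) \to \O_{\wb X}$ of the trace yields a descending chain $J^{\wb X}_0 \supseteq J^{\wb X}_1 \supseteq \cdots$. Taking global sections after twisting by $H$ gives a descending chain of subspaces of the finite-dimensional vector space $H^0\big(\wb X, \O_{\wb X}(H)\big)$, which must stabilize. Global generation on $\wb U$ then upgrades the equality of global sections to equality of sheaves: if $\sF' \subseteq \sF$ with $\sF$ globally generated on $\wb U$ and $H^0(\sF') = H^0(\sF)$, then the generators of $\sF|_{\wb U}$ already lie in $\sF'$, forcing $\sF'|_{\wb U} = \sF|_{\wb U}$. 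Restricting from $\wb U$ down to $X$ yields $J^X_e = J^X_{e+1}$ for all $e \gg 0$.

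\textbf{Main obstacle.}
The only genuinely delicate point is the compactification step: one must confirm that enlarging $X$ to $\wb X$ does not cause the non-finitely-generated locus to develop accumulation points at the original isolated points of $W$ (components at infinity are harmless, but the hypothesis of \autoref{thm.AsymptoticGG} that $\wb U$ contain all of $X$ must be preserved). Once this is checked, the proof is essentially the same "global generation plus Noetherian stabilization inside $H^0(\wb X, \O_{\wb X}(H))$" mechanism already exploited in \autoref{prop.UniformGlobalGenTau} and \autoref{thm.discrete isol}, and no further difficulty arises.
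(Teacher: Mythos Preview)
Your proposal is correct and follows essentially the same route as the paper: compactify, apply \autoref{thm.AsymptoticGG} with $D=-K_{\wb X}$, $m=p^e-1$, $\ell=p^e$ to get uniform global generation of $J_e^{\wb X}\otimes\O_{\wb X}(H)$ on an open set containing $X$, and then deduce stabilization from the descending chain of subspaces of $H^0(\wb X,\O_{\wb X}(H))$. The only cosmetic differences are that the paper cites \cite[Lemma~8.2]{GrafJumpingCoefficientsOfNonQGor} for your Step~3 and handles the compactification by a one-line reference to the proof of \autoref{thm.discrete isol} rather than spelling out the isolated-point check.
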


\begin{proof}
First notice that $J_e \supseteq J_{e+1}$ since $F^e_* \O_X \hookrightarrow F^{e+1}_* \O_X$ and $\sHom_{\O_X}(-, \O_X)$ is contravariant.
As in the proof of \autoref{thm.discrete isol}, we may assume that $X$ is an open subset of some normal projective variety $\wb X$.
By \autoref{thm.AsymptoticGG}, we know there exists an ample divisor $H$ on $\wb{X}$ such that
\[ F^e_* \O_{\wb X } \big( (1 - p^e) K_{\wb X} + p^e H \big) = F^e_* \O_{\wb X } \big( (1 - p^e) K_{\wb X} \big) \tensor \O_{\wb X}(H) \]
is globally generated on $X$, as an $\O_{\wb X}$-module, for all $e \ge 0$.
Hence its image $J_e^{\wb X} \tensor \O_{\wb X}(H)$ is also globally generated on $X$.
But $J_e^{\wb X}\big|_X = J_e^X$ since $X \subset \wb X$ is open.
Hence we see that $J_e = J_{e+1}$ for $e \gg 0$ by~\cite[Lemma~8.2]{GrafJumpingCoefficientsOfNonQGor}. This completes the proof.
\end{proof}

Finally we prove the final statement from the introduction.

\begin{proof}[\protect{Proof of~\autoref{prp.klt fg}}]
For~\ref{itm.klt fg.1}, we need to prove that for every characteristic zero klt pair $(X, D)$ and for every $\bQ$-divisor $B$ on $X$, the algebra $\sR(X, B)$ is finitely generated.
This is well-known to experts (see e.g.~\cite[Theorem~92]{KollarExercisesInBiratGeom}), but for completeness' sake we provide a proof.

The question is local, so we may assume that $B$ is effective and that $K_X + D \sim_\Q 0$.
Let $\pi\colon Y \to X$ be a small $\bQ$-factorial modification, which exists by~\cite[Corollary~1.4.3]{BirkarCasciniHaconMcKernan}.
For some rational $0 < \varepsilon \ll 1$, the pair $\big( Y, \pi^{-1}_*(D + \varepsilon B) \big)$ is klt.
The map $\pi$ being small, we have
\[ \pi_* \big[ \sR \big( Y, K_Y + \pi^{-1}_*(D + \varepsilon B) \big) \big] = \sR(X, K_X + D + \varepsilon B). \]
By~\cite[Theorem~1.2(3)]{BirkarCasciniHaconMcKernan}, the left-hand side is finitely generated. Hence so is the right-hand side.
Since $K_X + D \sim_\Q 0$ and $\varepsilon \in \Q$, we see that $\sR(X, K_X + D + \varepsilon B)$ and $\sR(X, B)$ have isomorphic Veronese subalgebras.
We conclude by~\cite[Lemma~2.4 and Theorem~3.2]{GotoHerrmannNishidaVillamayor}.

Concerning~\ref{itm.klt fg.3}, after shrinking $X$ we may assume that $X = \Spec R$ is affine and has pseudorational singularities.
If the singular locus of $X$ is zero-dimensional, we are clearly done.
So let $\frp \in \Spec R$ be the generic point of a one-dimensional component of $\Sing(X)$.
Localizing at $\frp$, we obtain a two-dimensional pseudorational germ $U := \Spec R_\frp \to X$ with closed point $\frm := \frp R_\frp$.
Let $\pi\colon \wt U \to U$ be a desingularization of $U$, with exceptional divisor $E \subset \wt U$.
The Grothendieck spectral sequence associated to the composition of functors $\Gamma_\frm \circ \pi_* = \Gamma_E$ yields an exact sequence
\[ \underbrace{\loccoh1.E.\wt U.\O_{\wt U}.}_{=0} \to \underbrace{\loccoh0.\frm.U.R^1 \pi_* \O_{\wt U}.}_{=\coh1.\wt U.\O_{\wt U}.} \to \underbrace{\loccoh2.\frm.U.\O_U. \to \loccoh2.E.\wt U.\O_{\wt U}.}_{\text{injective by pseudorationality}}, \]
where the first term is zero due to~\cite[Theorem~2.4]{LipmanDesingularizationOf2Dimensional}.
It follows that $\coh1.\wt U.\O_{\wt U}. = 0$, so $U$ has rational singularities in the sense of Lipman~\cite[Definition~1.1]{LipmanRationalSingularities}.

Now consider a $\bQ$-Weil divisor $B$ on $X$. By~\cite[Proposition~17.1]{LipmanRationalSingularities}, the restriction of $B$ to $U$ is $\bQ$-Cartier and then $B$ itself is $\bQ$-Cartier in a neighborhood of $\frp \in X$.
Applying this argument to every one-dimensional component of $\Sing(X)$, we see that except at an isolated collection of points, $B$ is $\bQ$-Cartier and in particular $\sR(X, B)$ is finitely generated.
\end{proof}

\def\cfudot#1{\ifmmode\setbox7\hbox{$\accent"5E#1$}\else
  \setbox7\hbox{\accent"5E#1}\penalty 10000\relax\fi\raise 1\ht7
  \hbox{\raise.1ex\hbox to 1\wd7{\hss.\hss}}\penalty 10000 \hskip-1\wd7\penalty
  10000\box7}
\providecommand{\bysame}{\leavevmode\hbox to3em{\hrulefill}\thinspace}
\providecommand{\MR}{\relax\ifhmode\unskip\space\fi MR}
\providecommand{\MRhref}[2]{%
  \href{http://www.ams.org/mathscinet-getitem?mr=#1}{#2}
}
\providecommand{\href}[2]{#2}

\end{document}